\documentclass[10pt,reqno]{amsart}
\usepackage{latexsym,amsmath,amssymb,amscd}
\usepackage[all]{xy}
\usepackage{enumerate}

\setcounter{tocdepth}{1}
\makeatletter
  \newcommand\@dotsep{4.5}
  \def\@tocline#1#2#3#4#5#6#7{\relax
     \ifnum #1>\c@tocdepth 
     \else
     \par \addpenalty\@secpenalty\addvspace{#2}%
     \begingroup \hyphenpenalty\@M
     \@ifempty{#4}{%
     \@tempdima\csname r@tocindent\number#1\endcsname\relax
        }{%
         \@tempdima#4\relax
           }%
      \parindent\z@ \leftskip#3\relax \advance\leftskip\@tempdima\relax
      \rightskip\@pnumwidth plus1em \parfillskip-\@pnumwidth
       #5\leavevmode\hskip-\@tempdima #6\relax
       \leaders\hbox{$\m@th
       \mkern \@dotsep mu\hbox{.}\mkern \@dotsep mu$}\hfill
       \hbox to\@pnumwidth{\@tocpagenum{#7}}\par
       \nobreak
        \endgroup
         \fi}
\makeatother 

\begin{document}

\makeatletter
\@addtoreset{figure}{section}
\def\thefigure{\thesection.\@arabic\c@figure}
\def\fps@figure{h,t}
\@addtoreset{table}{bsection}

\def\thetable{\thesection.\@arabic\c@table}
\def\fps@table{h, t}
\@addtoreset{equation}{section}
\def\theequation{
\arabic{equation}}
\makeatother

\newcommand{\bfi}{\bfseries\itshape}

\newtheorem{theorem}{Theorem}
\newtheorem{acknowledgment}[theorem]{Acknowledgment}
\newtheorem{algorithm}[theorem]{Algorithm}
\newtheorem{axiom}[theorem]{Axiom}
\newtheorem{case}[theorem]{Case}
\newtheorem{claim}[theorem]{Claim}
\newtheorem{conclusion}[theorem]{Conclusion}
\newtheorem{condition}[theorem]{Condition}
\newtheorem{conjecture}[theorem]{Conjecture}
\newtheorem{construction}[theorem]{Construction}
\newtheorem{corollary}[theorem]{Corollary}
\newtheorem{criterion}[theorem]{Criterion}
\newtheorem{data}[theorem]{Data}
\newtheorem{definition}[theorem]{Definition}
\newtheorem{example}[theorem]{Example}
\newtheorem{lemma}[theorem]{Lemma}
\newtheorem{notation}[theorem]{Notation}
\newtheorem{problem}[theorem]{Problem}
\newtheorem{proposition}[theorem]{Proposition}
\newtheorem{question}[theorem]{Question}
\newtheorem{remark}[theorem]{Remark}
\newtheorem{setting}[theorem]{Setting}
\numberwithin{theorem}{section}
\numberwithin{equation}{section}

\newcommand{\todo}[1]{\vspace{5 mm}\par \noindent
\framebox{\begin{minipage}[c]{0.85 \textwidth}
\tt #1 \end{minipage}}\vspace{5 mm}\par}

\renewcommand{\1}{{\bf 1}}

\newcommand{\hotimes}{\widehat\otimes}

\newcommand{\Ad}{{\rm Ad}}
\newcommand{\Alt}{{\rm Alt}\,}
\newcommand{\Ci}{{\mathcal C}^\infty}
\newcommand{\comp}{\circ}
\newcommand{\D}{\text{\bf D}}
\newcommand{\de}{{\rm d}}
\newcommand{\ev}{{\rm ev}}
\newcommand{\fimes}{\mathop{\times}\limits}
\newcommand{\id}{{\rm id}}
\newcommand{\ie}{{\rm i}}
\newcommand{\End}{{\rm End}\,}
\newcommand{\Fl}{{\rm Fl}}
\newcommand{\Gr}{{\rm Gr}}
\newcommand{\GL}{{\rm GL}}
\newcommand{\Hilb}{{\bf Hilb}\,}
\newcommand{\Hom}{{\rm Hom}\,}
\newcommand{\Ker}{{\rm Ker}\,}
\newcommand{\Kern}{\textbf{Kern}}
\newcommand{\Lie}{\textbf{L}}
\newcommand{\lf}{{\rm l}}
\newcommand{\pr}{{\rm pr}}
\newcommand{\R}{\mathbb R}
\newcommand{\Ran}{{\rm Ran}\,}
\newcommand{\RK}{{\mathcal P}{\mathcal K}^{-*}}
\newcommand{\spann}{{\rm span}}
\newcommand{\Tr}{{\rm Tr}\,}

\newcommand{\G}{{\rm G}}
\newcommand{\U}{{\rm U}}
\newcommand{\VB}{{\rm VB}}

\newcommand{\CC}{{\mathbb C}}
\newcommand{\RR}{{\mathbb R}}

\newcommand{\Ac}{{\mathcal A}}
\newcommand{\Bc}{{\mathcal B}}
\newcommand{\Cc}{{\mathcal C}}
\newcommand{\Dc}{{\mathcal D}}
\newcommand{\Ec}{{\mathcal E}}
\newcommand{\Fc}{{\mathcal F}}
\newcommand{\Gc}{{\mathcal G}}
\newcommand{\Hc}{{\mathcal H}}
\newcommand{\Kc}{{\mathcal K}}
\newcommand{\Lc}{{\mathcal L}}
\newcommand{\Oc}{{\mathcal O}}
\newcommand{\Pc}{{\mathcal P}}
\newcommand{\Qc}{{\mathcal Q}}
\newcommand{\Sc}{{\mathcal S}}
\newcommand{\Tc}{{\mathcal T}}
\newcommand{\Vc}{{\mathcal V}}
\newcommand{\Xc}{{\mathcal X}}
\newcommand{\Yc}{{\mathcal Y}}
\newcommand{\Zc}{{\mathcal Z}}

\renewcommand{\gg}{{\mathfrak g}}
\newcommand{\Gg}{{\mathfrak g}}
\newcommand{\hg}{{\mathfrak h}}
\newcommand{\mg}{{\mathfrak m}}
\newcommand{\nng}{{\mathfrak n}}
\newcommand{\Sg}{{\mathfrak S}}
\newcommand{\ug}{{\mathfrak u}}

\pagestyle{myheadings}
\markboth{}{}


\makeatletter
\title[Infinitesimal aspects of idempotents in Banach algebras]{Infinitesimal aspects of idempotents \\ in Banach algebras}
\author{Daniel Belti\c t\u a and  Jos\'e E. Gal\'e}
\address{Institute of Mathematics ``Simion
Stoilow'' of the Romanian Academy, 
RO-014700 Bucharest, Romania}
\email{Daniel.Beltita@imar.ro}
\address{Departamento de matem\'aticas, Facultad de Ciencias, 
Universidad de Zaragoza, 50009 Zaragoza, Spain}
\email{gale@unizar.es}
\thanks{
This research was partly  supported by Project MTM2013-42105-P, Fondos FEDER, Spain. 
The first-named author has also been supported by a Grant of the Romanian National Authority for Scientific Research, CNCS-UEFISCDI, project number PN-II-ID-PCE-2011-3-0131. 
The second-named author has also been supported by Project E-64, D.G. Arag\'on, Spain.}
\dedicatory{To Jean Esterle}
\date{4 November 2016}
\keywords{Banach algebra; fiber bundle; 
connection; 
pull-back; flag manifold; Stiefel bundle; tautological bundle
}
\subjclass[2000]{Primary 46H05; Secondary 22F30, 22E65}
\makeatother

\begin{abstract} 
We investigate infinitesimal properties of sets of ordered $n$-uples of  idempotents in a symmetric Banach $*$-algebra. These sets are called flag manifolds and carry several interesting bundles that hold an important role 
in some areas of operator theory. In this direction, we introduce and study Stiefel bundles on flag manifolds, which are extensions of the well known Stiefel bundles on Grassmannians. 
The main ingredient of our investigation is the notion of connection  on an infinite-dimensional bundle, and we survey some equivalent ocurrences of such a notion in the literature.  
\end{abstract}

\maketitle

\tableofcontents

\section{Introduction}

This paper is written in particular as a way to express our recognition of the important achievements of Jean Esterle in Mathematics. 
Among the great variety of research topics approached by him,  
there is that one of the geometry/topology associated with polynomial path connection in sets of idempotents 
in Banach algebras, 
see \cite{Es83}, \cite{EG04}, \cite{Gi03}, \cite{Tr85}. 
We wish to contribute to this proceedings volume in honor of Jean Esterle by pointing out several differential-geometric features that flag manifolds enjoy. 
These manifolds 
are sets of ordered $n$-uples of  idempotents in Banach algebras, 
naturally arising in problems related with differential structures 
of the functional analysis, operator theory, and particularly in spectral theory;
see for instance \cite{PR87}, \cite{MR92}, \cite{CPR90}, \cite{MS97}, \cite{CG99}, \cite{ACS01}, and the references therein. 

Flag manifolds carry several interesting bundles that hold an important role 
in some areas of operator theory like the theory of reproducing kernels, the noncommutative spectral theory, or the Cowen-Douglas theory. In this direction, 
we introduce here the notion of Stiefel bundle on a flag manifold and study some of its basic properties. Stiefel bundles on flag manifolds generalize the corresponding notion defined over the Grassmanian manifold associated with a single idempotent (see \cite{DG01}, \cite{DG02}). 
We investigate these notions using as a main tool 
the notion of connection on a fiber bundle, which is discussed in detail in Section~\ref{Sect2}. 

The theory of connections is of great importance 
in finite-dimensional differential and algebraic geometry, 
as well as in their applications to operator theory, 
see for instance \cite{CD78}.  
As regards the increasingly important geometric infinite-dimensional setting, 
there are several different ---though equivalent--- methods to introduce connections on Banach and more general bundles. 
This circle of ideas is 
certainly known to many experts in finite-dimensional differential geometry,  
but here we discuss it in a self-contained way with a view to some problems in operator theory, 
where one must work with infinite-dimensional manifolds modeled on Banach spaces. 

Our present paper is organized as follows. 
In Section~\ref{Sect2} we survey 
a number of different definitions 
of a connection and show how these definitions are related to each other. 
We choose to make an exposition in terms of exact sequences of fiber bundles. 
We also give some original results, particularly related to the pull-back operation. 
Section~\ref{Sect3} of this paper is devoted to introducing Stiefel bundles on flag manifolds associated to symmetric Banach $*$-algebras.  
After that, using some of the general methods developed in Section~\ref{Sect2}, 
we analyze some basic properties of Stiefel bundles, including transitive group actions, principal and linear connections on various bundles on flag manifolds, and related topics. 

\subsection*{General conventions and notation} 
Throughout this paper, by Banach space we mean a real or complex complete normable vector space $E$, 
so we do not assume that any norm on $E$ has been fixed. 
In this case we denote by $\Bc(E)$ its corresponding space of continuous linear operators. 
Unless otherwise mentioned, by manifold or Banach manifold we mean a smooth manifold modeled on a Banach space.  
Smoothness always refers here to Fr\'echet differentiability on Banach spaces. 
A Banach-Lie group is a Banach manifold $G$ endowed with a group operation $(x,y)\mapsto xy$, 
that is smooth and for which  the inversion $x\mapsto x^{-1}$ is also smooth. 
The Lie algebra of $G$ is $\gg:=T_{\1}G$, the tangent space at the unit element $\1\in G$. 
This is a Banach space endowed with a Lie bracket $[\cdot,\cdot]\colon\gg\times\gg\to\gg$ 
that is in particular a skew-symmetric bilinear map  
defined as $\frac{\partial}{\partial x}\big\vert_{x=\1}\frac{\partial}{\partial y}\big\vert_{y=\1}xyx^{-1}$. 
Thus, if $G=\Ac^\times$, the group of invertible elements in a unital Banach algebra $\Ac$, 
then $\gg=\Ac$ as Banach spaces, and the Lie bracket on $\Ac$ is the usual commutator $[a,b]=ab-ba$ for all $a,b\in\Ac$. 
Basic references for Lie groups, global analysis and differential geometry in infinite dimensions are 
\cite{Lr11}, \cite{La01}, \cite{KM97a}, and \cite{Up85}, 
but most of the relevant definitions are collected in Section~\ref{Sect2}, 
for the reader's convenience.

\section{Connections on principal and vector bundles}\label{Sect2}

\subsection{Connections on fiber bundles}

In this paper, by fiber bundle with total space $M$ and base space $Z$ we mean any smooth map $\varphi\colon M\to Z$ 
where $M$ and $Z$ are Banach manifolds. 

We are interested in two types of fiber bundles, the principal bundles and the vector bundles.

\begin{definition}\label{pb1}
\normalfont
Let $P$ and $Z$ be Banach manifolds and $G$ be a Banach-Lie group with a smooth right action 
$\mu\colon P\times G\to P$, $(x,g)\mapsto \mu_g(x)=\mu_x(g)$. 

A fiber bundle $\pi \colon P\to Z$ is a \emph{principal bundle} with structure group $G$, 
or \emph{principal $G$-bundle} for short, if the following conditions are satisfied:
\begin{enumerate}
\item\label{pb1_item1} 
The action $\mu$ is free, that is, 
for each $x\in P$ the mapping $G\to P$, $g\mapsto \mu_g(x)$ is injective. 
\item\label{pb1_item2} 
The orbits of $\mu$ are the fibers of $\pi $, hence 
$$
(\forall z\in Z)(\forall x_0\in\pi^{-1}(z))\quad 
\pi^{-1}(z)=\{\mu_g(x_0)\mid g\in G\}.
$$
\item\label{pb1_item3} 
The mapping $\pi$ is a locally trivial bundle with the fiber $G$. 
In other words, every $z\in Z$ has an open neighborhood $V\subseteq Z$ 
such that there exists a $G$-equivariant diffeomorphism 
$\psi\colon \pi ^{-1}(V)\to V\times G$ with the property that the diagram 
\begin{equation}\label{pb1_eq1}
\begin{CD} 
\pi ^{-1}(V) @>{\psi}>> V\times G \\
@V{\pi }VV @VVV \\
V @>{\id}>> V
\end{CD} 
\end{equation}
is commutative. 
\end{enumerate}
If $G$, $P$, and $Z$ are complex manifolds, the mappings  
$\mu$ and $\pi $ are holomorphic, and at every point $z\in Z$ the local trivialization $\psi$ can be chosen holomorphic, 
then we say that $\pi \colon P\to Z$ is a \emph{holomorphic principal $G$-bundle}. 
\end{definition}

\begin{remark}\label{pb2}
\normalfont
Condition~\eqref{pb1_item3} in Definition~\ref{pb1} shows that 
the fibers of $\pi $ (or, equivalently, the orbits of $\mu$) are submanifolds of~$P$. 
We also note that the commutativity of the diagram~\eqref{pb1_eq1} is equivalent to the fact 
that there exists a smooth mapping $\varphi\colon \pi ^{-1}(V)\to G$ such that 
$\phi(u)=(\pi(u),\varphi(u))$ and 
$\varphi(\mu_g(u))=\varphi(u)g$ for every $g\in G$ and $u\in \pi ^{-1}(V)$. 
\end{remark}

We say that a fiber bundle $\varphi\colon M\to Z$  is a (real or complex) \emph{vector bundle} if for every $z\in Z$ 
the fiber $\varphi^{-1}(z)$ has the structure of a (real or complex) Banach space and there exist 
a family of open subsets $V_j\subseteq Z$, (real or complex) Banach spaces $E_j$ and diffeomorphisms 
$\Psi_j\colon \varphi^{-1}(V_j)\to V_j\times E_j$ for all $j\in J$ with the following properties: 
First, for every $j\in J$ and $z\in V_j$ the mapping $x\mapsto \Psi_j^{-1}(z,x)$ is a linear topological isomorphism 
of Banach spaces from $E$ onto $\varphi^{-1}(z)$. 
Second, for all $j,k\in J$ with $V_j\cap V_k\ne\emptyset$ there exists a smooth map $\chi_{j,k}\colon V_j\cap V_k\to \Bc(E_k,E_j)$ 
with $(\Psi_j\circ\Psi_k^{-1})(z,x)=(z,\chi_{jk}(z)x)$ for all $x\in E_k$ and $z\in V_j\cap V_k$. 
The above data are called a system of local trivializations of the vector bundle $\varphi$. 
Besides the trivial vector bundles $V\times E\to V$, $(z,x)\mapsto z$, 
basic examples of vector bundles are the tangent bundles of smooth manifolds.

To any 
fiber bundle $\varphi\colon M\to Z$, 
one can naturally associate 
two well known vector bundle structures of the tangent space $TM$:
\begin{itemize}
\item[$\bullet$] $\tau_M\colon TM\to M$, the tangent bundle of the total space $M$.
\item[$\bullet$] $T\varphi\colon TM\to TZ$, the tangent map of $\varphi$.
\end{itemize}

We take as starting point a definition of connection on a fiber bundle which provides us with a suitable horizontal distribution.  

\begin{definition}\label{def11}
\normalfont
A {\it connection} on the bundle $\varphi\colon M\to Z$ is any smooth mapping
$\Phi\colon TM\to TM$ with the following properties:
\begin{itemize}
\item[{\rm(i)}] $\Phi\circ\Phi=\Phi$;
\item[{\rm(ii)}] the pair $(\Phi,\id_M)$ is an endomorphism of the bundle
$\tau_M\colon TM\to M$;
\item[{\rm(iii)}] for every $x\in M$, if we denote $\Phi_x:=\Phi|_{T_xM}\colon
T_xM\to T_xM$, then we have $\Ran(\Phi_x)=\Ker(T_x\varphi)$,
so that we get an exact sequence
$$
0\rightarrow H_xM\hookrightarrow T_xM\mathop{\longrightarrow}\limits^{\Phi_x}
T_xM
\mathop{\longrightarrow}\limits^{T_x\varphi} T_{\varphi(x)}Z
\rightarrow 0.
$$
\end{itemize}
Here $H_xM:=\Ker(\Phi_x)$ is a closed linear subspace of $T_xM$ called the space of {\it horizontal vectors} at
$x\in M$. Similarly, the space of {\it vertical vectors} at $x\in M$ is 
$\Vc_xM:=\Ker(T_x\varphi)=\Ran(\Phi_x)$. Then we have the direct sum decomposition $T_xM=H_xM\oplus\Vc_xM$, for every $x\in M$ (cf. \cite[subsect. 37.2]{KM97a}). We denote by $\Vc P$ the {\it vertical} subbundle of $TM$ -which always exists- whose fibers at each $x\in M$ are $\Vc_xM$. 
The existence of the connection $\Phi$ defines the {\it horizontal distribution} given by the {\it horizontal} subbundle of $TM$ with fibers $H_x M$, $x\in M$.
\end{definition}

In the special cases of principal or vector bundles one defines:
\begin{enumerate}
\item If $\pi\colon P\to Z$ is a principal $G$-bundle then a connection $\Phi$ on $\pi\colon P\to Z$ is called {\it principal} whenever it is $G$-equivariant, that is,
$$
T( \mu_g)\circ\Phi=\Phi\circ T( \mu_g)
$$
for all $g\in G$ (cf. \cite[subsect. 37.19]{KM97a}).

\item
If $\Pi\colon D\to Z$ is a vector bundle then a connection $\Phi$ on 
$\Pi\colon D\to Z$ is called {\it linear} if the pair $(\Phi, \id_{TZ})$ is an endomorphism of the vector bundle
$T\Pi\colon TD\to TZ$ (i.e., if $\Phi$ is linear on the fibers of the bundle $T\Pi$); see \cite[subsect. 37.27]{KM97a}.
\end{enumerate}

There are different but equivalent notions of connections for principal or vector infinite-dimensional bundles, which not always are well explained or explicitly indicated in the literature. Next, we show such equivalences in a unified way, through exact sequences. Our exposition relies on references \cite{El67}, \cite{KM97a}, 
\cite{La01}, \cite{Pe69}, \cite{Va74}, among others.

\begin{notation}\label{prelim}
\normalfont
We will use 
the following notation and terminology:
\begin{itemize}
\item[$\bullet$] For any maps $\alpha\colon A\to R$ and 
$\beta\colon B\to R$ 
we define their \textit{fiber product} by 
$$
A\fimes_{\alpha, R,\beta}B:=\{(a,b)\in A\times B\mid\alpha(a)=\beta(b)\},
$$
and we define the \textit{pull-back of $\beta$ by $\alpha$} as the map 
$$
\alpha^*(\beta)\colon A\fimes_{\alpha,R,\beta}B\to A, \quad 
(a,b)\mapsto a.
$$
If no confusion is possible as regards the maps $\alpha$ and $\beta$, 
then we denote simply 
$$
A\fimes_{\alpha, R,\beta}B=A\fimes_{ R} B.
$$ 
See \cite[Ch. II, \S 2]{La01} 
for a discussion of fiber products of smooth maps. 
\item[$\bullet$] If $\alpha\colon E\to A$ and $\beta\colon F\to B$ 
are vector bundles and $f\colon E\to F$ and $\tilde{f}\colon A\to B$ 
are smooth maps such that the pair $(f,\tilde{f})$ 
is a morphism of vector bundles, then we define 
$$
f!\colon E\to A\fimes_B F,\quad \xi\mapsto(\alpha(\xi),f(\xi)).
$$
So the pair $(f!,\id_A)$ is a morphism of vector bundles 
from $\alpha\colon E\to A$ to $\tilde{f}^*(\beta)\colon A\fimes_B F\to A$.
\item[$\bullet$] If $G$ is a Banach-Lie group and $\alpha\colon E\to A$ 
is a vector bundle endowed with a smooth action from the right 
$\mu\colon E\times G\to E$ that maps every fiber of $E$ onto itself 
and is fiberwise linear, then we say that $\alpha$ is a 
\textit{vector $G$-bundle}. 
A \textit{morphism of vector $G$-bundles} is a morphism of vector bundles  
which is $G$-equivariant. 
\end{itemize}
\end{notation}

\subsection{Principal connections}

Let $\pi\colon P\to Z$ be a principal bundle 
with the structure Banach-Lie group $G$ and free action 
$\mu\colon P\times G\to P$. 
The induced action of $G$ on the tangent manifold $TP$ is defined by the tangent map $T\mu$, given by 
$$
TP\times G\to TP, \quad (v,g)\mapsto (T\mu_g)v.
$$

Let $\Gg$ be the Lie algebra of $G$. One considers the right action of $G$ on the manifold $P\times\Gg$ defined by 
$$
(P\times\Gg)\times G\to P\times\Gg,\quad 
((u,X),g)\mapsto(\mu(u,g),\Ad_G(g^{-1})X).
$$
 
Denote by $\iota\colon P\times\Gg\to \Vc P\subset TP$ the vector bundle isomorphism defined by the infinitesimal action of the Lie algebra $\Gg$ of $G$; that is, 
$$
(u,X)\mathop{\mapsto}\limits^{\iota}(T_{\1}\mu(u,\cdot))X
=
T_{u,\1}\mu(0_u,X)\in\Vc_uP\subset T_uP
$$
(\cite[Th. 37.18(2)]{KM97a}). The vector field $u\mapsto\iota(u,X)$ is called the {\it fundamental vector field} corresponding to $X\in\Gg$.

Notate for a moment
$u\cdot g=\mu(u,g)$ for $u\in P$, $g\in G$. Then we have 
$$
\iota(u,X)\cdot g=\left((T\mu_u)X\right)\cdot g=T\mu_gT\mu_uX
$$
and
$$
\begin{aligned}
\iota\left((u,X)\cdot g\right)
&=\iota\left((u\cdot g, \Ad_G(g^{-1})X)\right)\\
&=T\mu_{u\cdot g}\Ad_G(g^{-1})X
=T\Theta_{g,u\cdot g}(X)
=T\mu_gT\mu_uX
\end{aligned}
$$
where for every $h\in G$, 
$$
\Theta_{g,u\cdot g}(h)=\mu_{u\cdot g}(g^{-1}hg)=(u\cdot h)\cdot g
=\left(\mu_g\circ\mu_u\right)(h).
$$ 

The above means that  $\iota$ is $G$-equivariant, and hence $\iota$ is a morphism  of vector $G$-bundles between the trivial 
fiber bundle $\pr_1^{P\times\Gg}\colon P\times\Gg\to P$, $(u,X)\mapsto u$,  
and $\tau_P\colon TP\to P$. Moreover, the map 
$P\times\Gg\buildrel{\iota}\over\longrightarrow TP$ is injective since the $G$ acts freely on $P$, and the map $TP\buildrel{(T\pi)!}\over\longrightarrow P\fimes_{\pi,Z,\tau_Z}TZ$, 
$(T\pi)!(v):=((\tau_{P})(v), T\pi(v))$, $\forall v\in TP$, is clearly  surjective. Then it follows that the horizontal arrows $\iota$ and $(T\pi)!$ of the commutative diagram 
\begin{equation}\label{D1}
\begin{CD}
P\times\Gg @>{\iota}>> TP @>{(T\pi)!}>> P\fimes_Z TZ \\
@VV{\pr_1^{P\times\Gg}}V @VV{\tau_P}V @VV{\pi^*(\tau_Z)}V \\
P @>{\id_P}>> P @>{\id_P}>> P
\end{CD}
\end{equation} 
are $G$-equivariant, 
and the upper row of this diagram defines 
an exact sequence of vector $G$-bundles.  

In the following theorem, we summarize different approaches to the notion of connection which are found in the literature. Put 
$q:=\pr_2^{P\times\Gg}\circ \iota^{-1}\colon\Vc P\to\Gg$.

\begin{theorem}\label{principalCARACT}
Let $\pi\colon P\to Z$ be a principal Banach bundle 
with structure Banach-Lie group $G$ and free action $\mu\colon P\times G\to P$. The following assertions are equivalent:
\begin{itemize}
\item[\rm{(1)}] There exists a fiberwise linear and smooth $G$-equivariant map
$$
\chi\colon P\fimes_{\pi,Z,\tau_Z}TZ
\to TP\ \hbox{ such that }\ (T\pi)!\circ\chi=\id_{\Pc\fimes_{\pi,Z,\tau_Z}TZ}.
$$
\item[\rm{(2)}]  There exists a morphism of vector $G$-bundles
$\psi\colon TP\to P\times\Gg$ such that $\psi\circ\iota=\id_{P\times\Gg}$.
\item[\rm{(3)}]  There exists a smooth map $\omega\colon TP\to\Gg$ such that
\begin{equation}\label{W1}
(\forall X\in\Gg)(\forall u\in P)\quad
\omega(T_{\1}(\mu(u,\cdot))X)=X
\end{equation}
and
\begin{equation}\label{W2}
(\forall g\in G)\quad \omega\circ T(\mu(\cdot,g))=\Ad_G(g^{-1})\circ\omega.
\end{equation}
\item[\rm{(4)}]  There exists a principal connection $\Phi$ on $\pi\colon P\to Z$; that is, $\Phi\colon TP\to\Vc P$ is a connection such that 
$$
T(\mu_g)\circ\Phi=\Phi\circ T(\mu_g),\quad g\in G.
$$
\end{itemize}

Then, if any of the above points (1)-(4) holds, the relationship between $\chi$, $\psi$, $\omega$, $\Phi$ is given by
\begin{equation}\label{D3}
\iota\circ\psi+\chi\circ(T\pi)!=\id_{TP},
\end{equation}
\begin{equation}\label{D4}
(\forall v\in TP)\quad \psi(v)=(\tau_P(v),\omega(v))\in P\times\Gg,
\end{equation}
\begin{equation}\label{D5}
\Phi=\iota\circ\psi.
\end{equation}
\begin{equation}\label{D5bis}
\omega=q\circ\Phi.
\end{equation}
\end{theorem}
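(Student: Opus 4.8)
The governing idea is that all four conditions are equivalent ways of splitting the short exact sequence of vector $G$-bundles
$$0\to P\times\Gg\xrightarrow{\ \iota\ }TP\xrightarrow{\ (T\pi)!\ }P\fimes_Z TZ\to 0$$
over the base $P$, established by the upper row of diagram~\eqref{D1}: condition~(2) gives a $G$-equivariant retraction of $\iota$, condition~(1) a $G$-equivariant section of $(T\pi)!$, condition~(4) a $G$-equivariant idempotent of $\tau_P$ whose range is $\Vc P=\Ran\iota$, and condition~(3) is merely the coordinate form of~(2). The plan is to route every implication through~(2), reading off the relations \eqref{D3}--\eqref{D5bis} along the way. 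The equivalence (2)$\Leftrightarrow$(3) is a pure unpacking of definitions: a morphism of vector bundles $\psi\colon TP\to P\times\Gg$ over $\id_P$ is the same datum as a smooth fiberwise-linear map $\omega\colon TP\to\Gg$ via $\psi(v)=(\tau_P(v),\omega(v))$, which is \eqref{D4}; under this correspondence the retraction identity $\psi\circ\iota=\id$ becomes $\omega(T_{\1}(\mu(u,\cdot))X)=X$, i.e.\ \eqref{W1}, while the $G$-equivariance of $\psi$ for the $\Ad$-twisted action on $P\times\Gg$ becomes exactly \eqref{W2}.

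For (2)$\Leftrightarrow$(4) I would, given a retraction $\psi$, set $\Phi:=\iota\circ\psi$, which is \eqref{D5}. Then $\Phi\circ\Phi=\iota\circ(\psi\circ\iota)\circ\psi=\Phi$; the pair $(\Phi,\id_P)$ is a bundle endomorphism of $\tau_P$ because $\iota$ and $\psi$ are; and $\Ran\Phi=\iota(\Ran\psi)=\Ran\iota=\Vc P=\Ker(T\pi)$ since $\psi$ is onto, so $\Phi$ is a principal connection, its $G$-equivariance being inherited from $\iota$ and $\psi$. Conversely, a principal connection $\Phi$ has range in $\Vc P$, where $\iota^{-1}$ is defined, so $\psi:=\iota^{-1}\circ\Phi$ makes sense; since $\iota$ maps into $\Ran\Phi$ and $\Phi$ restricts to the identity there, one has $\Phi\circ\iota=\iota$ and hence $\psi\circ\iota=\id$. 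Projecting onto the second factor gives $\omega=\pr_2\circ\psi=q\circ\Phi$, which is \eqref{D5bis}.

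The equivalence (1)$\Leftrightarrow$(2) is the classical passage between a retraction and a section of a split sequence, carried out bundle-wise. Given $\psi$, I define $\chi$ by $\chi((T\pi)!(v)):=v-\iota(\psi(v))$; this is well defined because two preimages of the same point differ by an element of $\Ran\iota=\Ker((T\pi)!)$, which $v\mapsto v-\iota(\psi(v))$ annihilates thanks to $\psi\circ\iota=\id$. Then $(T\pi)!\circ\chi=\id$ follows from $(T\pi)!\circ\iota=0$, and the defining formula is precisely the relation \eqref{D3}. Conversely, from a section $\chi$ the endomorphism $\id_{TP}-\chi\circ(T\pi)!$ has range inside $\Ker((T\pi)!)=\Ran\iota$ and so factors uniquely as $\iota\circ\psi$, producing the desired retraction and again yielding \eqref{D3}.

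I expect the genuine obstacle to be analytic rather than algebraic: one must verify that the $\chi$ produced above is smooth and not merely defined fiber by fiber. I would dispatch this using that $\pi$ is locally trivial, so $(T\pi)!$ is a surjective submersion admitting local smooth sections $s$; then locally $\chi=(\id-\iota\circ\psi)\circ s$ is smooth, hence smooth globally, and the $G$-equivariance of $\iota,\psi,\Phi,\chi$ guarantees that all the resulting maps are morphisms of vector $G$-bundles. It is worth stressing that no complementability hypothesis is needed, because each of (1)--(4) already \emph{posits} the existence of a splitting; the content of the theorem is that these existence statements coincide and that the four objects are interrelated by \eqref{D3}--\eqref{D5bis}.
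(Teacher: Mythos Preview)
Your proposal is correct and follows essentially the same approach as the paper: both recognize that (1)--(4) are equivalent formulations of a $G$-equivariant splitting of the exact sequence \eqref{D2}, and both route the argument through the retraction~$\psi$ (the paper's (4)$\Rightarrow$(3) via $\omega(v)=T_{\1}(\mu_{\tau_P(v)})^{-1}\Phi(v)$ is exactly your $\psi=\iota^{-1}\circ\Phi$ projected to the second factor). Your explicit treatment of the smoothness of $\chi$ via local sections of the submersion $(T\pi)!$ is a welcome addition that the paper leaves implicit in its appeal to the ``standard'' splitting argument.
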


\begin{proof}
The condition stated for $\chi$ in (1) of the statement means that 
$\chi$ splits the short exact sequence of vector $G$-bundles
defined by \eqref{D1}, 
that is, 
\begin{equation}\label{D2}
0\to\pr_1^{P\times\Gg} \mathop{\longrightarrow}\limits^{\iota} \tau_P
 \mathop{\longrightarrow}\limits^{(T\pi)!} \pi^*(\tau_Z) 
\to 0. 
\end{equation} 
In turn, the splitting of \eqref{D1} through $\chi$ is equivalent, in standard way, to the existence 
of a morphism of vector $G$-bundles
$\psi\colon TP\to P\times\Gg$ such that $\psi\circ\iota=\id_{P\times\Gg}$. 

In fact, the relationship between $\psi$ and $\chi$ is given by the equation
$$
\iota\circ\psi+\chi\circ(T\pi)!=\id_{TP}.
$$

The two terms in the left-hand side of this equation are 
idempotents endomorphisms of the vector $G$-bundle $\tau_P\colon TP\to P$ 
whose composition is equal to $0$. 
Since $(T\pi)!$ is surjective and $\iota$ is injective, 
we can use~(\ref{D3}) to compute each one of the maps 
$\psi$ and $\chi$ if we know the other one.

This gives us the equivalence between (1) and (2) of the statement.

On the other hand, there exists a smooth map 
$\omega\colon TP\to\Gg$ such that 
$$
(\forall v\in TP)\quad \psi(v)=(\tau_P(v),\omega(v))\in P\times\Gg,
$$
since the map $\psi\colon TP\to P\times\Gg$ is a bundle morphism over $P$. 
Then the condition $\psi\circ\iota=\id_{P\times\Gg}$ 
is equivalent to 
$$
(\forall X\in\Gg)(\forall u\in P)\quad 
\omega(T_{\1}(\mu_u)X)=X
$$
and the condition that $\psi$ should be $G$-equivariant is equivalent to 
$$
(\forall g\in G)\quad \omega\circ T(\mu_g)=\Ad_G(g^{-1})\circ\omega.
$$

Thus there exists a natural one-to-one correspondence between 
the splittings of the exact sequence~\eqref{D2} 
of vector $G$-bundles over $P$ on the one hand, 
and maps $\omega$ satisfying (\ref{W1}) and (\ref{W2}) on the principal bundle $\pi\colon P\to Z$ 
on the other hand. 
(See also Theorem~1.1 in \cite{Va74}.) This gives us the equivalence between points (2) and (3) -or (1) and (3)- of the statement.

Set now $\Phi:=\iota\circ\psi$. As said before, $\Phi$ is an idempotent endomorphism of the vector $G$-bundle $\tau_P$. In particular one has  $\Ran(\Phi_u)=\Ker(\id_{TP}-\Phi_u)$ for every $u\in P$, and also, by (\ref{D3}), 
$\id_{TP}-\Phi=\chi\circ(T\pi)!$ where $\chi$ is injective. Thus we get 
$\Ran(\Phi_u)=\Ker(T_u\pi)$ for all $u\in P$. Moreover, the equality 
$T(\mu_g)\circ\Phi=\Phi\circ T(\mu_g)$, for all $g\in G$, follows readily by the $G$-equivariance of $\iota$ and $\psi$.

Conversely, for a given $\Phi\colon TP\to\Vc P$ as in (4), set 
$\omega(v):=T_\1(\mu_{\tau_P(v)})^{-1}\Phi(v)$, $v\in TP$ (recall that $\iota$ is a trivialization of the vertical subbundle). Then $\omega$ is a smooth map satisfying (\ref{W1}) by definition and (\ref{W2}) by the $G$-equivariance of $\Phi$. 

Finally, \eqref{D5bis} is easily deduced: 
$q\circ\Phi=\pr_2^{P\times\Gg}\circ\iota^{-1}\circ\psi=\pr_2^{P\times\Gg}\circ\psi=\omega$.

Thus we have proved the theorem.  
\end{proof}

As regards the bundle homomorphisms appearing in the above theorem, the mapping $\chi$ clearly reflects the idea to lift the tangent space $TZ$ of the base space $Z$ as the space of horizontal vectors in $TP$. This map is introduced as 
{\it connection} in \cite{Pe69}, for instance. The mapping $\omega$ corresponds to the classical notion of {\it connection form} associated with a given principal connection (cf. \cite[p. 387]{KM97a}; see also \cite{KN63}), and the mapping $\psi$ can be seen as a kind of trivialization of the vertical bundle $\Vc P\to P$.

\subsection{Connections for vector bundles}

Let $\Pi\colon D\to Z$ be a vector bundle.
Let $\Vc D=\Ker(T\Pi)$ ($\subseteq TD$) be the vertical part of the tangent bundle $\tau_D\colon TD\to D$. 
A useful description of $\Vc D$ can be obtained by considering the fibered product 
$D\fimes_Z D:=\{(\xi_1,\xi_2)\in D\times D\mid\Pi(\xi_1)=\Pi(\xi_2)\}$ 
along with the natural maps $r_j\colon D\fimes_Z D\to D$, $r_j(\xi_1,\xi_2)=\xi_j$ for $j=1,2$. 
Define for every $(\xi_1,\xi_2)\in D\fimes_Z D$ the path 
$c_{\xi_1,\xi_2}\colon\R\to D$, $c_{\xi_1,\xi_2}(t)=\xi_1+t\xi_2$ that passes through $\xi_1$ in the direction $\xi_2$. 
Then it is easily seen that we have a well-defined diffeomorphism 
\begin{equation}\label{Epsilon}
\varepsilon\colon D\fimes_Z D\to \Vc D,\quad \varepsilon(\xi_1,\xi_2)=\dot c_{\xi_1,\xi_2}(0)\in T_{\xi_1} D,
\end{equation} 
which is in fact an isomorphism between the vector bundles 
$r_1\colon D\fimes_Z D\to D$ and $\tau_D\vert_{\Vc D}\colon \Vc D\to D$. 
We then get a natural mapping 
$r:=r_2\circ\varepsilon^{-1} \colon\Vc D\to D$ and the pair 
$(r,\Pi)$ is a homomorphism of vector bundles from $\tau_D\vert_{\Vc D}\colon \Vc D\to D$ 
to $\Pi\colon D\to Z$. The manifold $D\fimes_Z D$ and morphism $\varepsilon$ play, for vector bundles, the same role as the manifold $P\times\Gg$ and morphism $\iota$ play for principal bundles. 

In the present setting, we have the exact sequences of vector bundles given by 
\begin{equation}\label{ESvb}
\begin{CD}
D\fimes_Z D @>\varepsilon>> TD @>{T\pi!}>> TZ\fimes_Z D\\
@VV{\pr_1}V @VV{\tau_D}V @VV{\Pi^*(\tau_Z)}V \\
D @>{\id_{D}}>> D @>{\id_{D}}>> D.
\end{CD}
\end{equation}
Similarly to the case of principal bundles we next show different but equivalent presentations of connections on Banach vector bundlles. Recall, 
$r=r_2\circ\varepsilon^{-1}$.

\begin{theorem}\label{caractVECTOR}
For any vector bundle $\Pi\colon D\to Z$, the following assertions are equivalent.
\begin{itemize}
\item[\rm{(1)}] There exists a smooth map
$
\gamma\colon TZ\fimes_{\tau_Z,Z,\Pi}D\to TD
$
such that: 
\begin{itemize}
\item[$1^\circ$] For all $(v,\xi)\in TZ\fimes_Z D$ 
we have $T\Pi(\gamma(v,\xi))=v$ and $\tau_D(\gamma(v,\xi))=\xi$. 
\item[$2^\circ$] For all $z\in Z$ and $\xi\in D_z$ 
the mapping $\gamma(\cdot,\xi)\colon T_zZ\to T_\xi D$ is linear. 
\item[$3^\circ$] For all $z\in Z$ and $v\in T_zZ$ 
the mapping $\gamma(v,\cdot)\colon D_z\to(TD)_v$ is linear, 
where $(TD)_v$ stands for the fiber of the vector bundle 
$T\Pi\colon TD\to TZ$ over $v\in TZ$. 
\end{itemize}
\item[\rm{(2)}] There exists a smooth map 
$\lambda\colon TD\to D\fimes_{\Pi,Z,\Pi}D$ 
with $
\lambda\circ\varepsilon=\id_{D\fimes_{\Pi,Z,\Pi}D},
$ 
so that the pair $(\lambda,\id_D)$ is a morphism of vector bundles from 
$\tau_D\colon TD\to D$ to $\pr_1\colon D\times_Z D\to D$.
\item[\rm{(3)}] There exists a smooth mapping $K\colon TD\to D$ such that:
\begin{itemize}
\item[{\rm(i)}] $\Pi\circ K=\Pi\circ\tau_D$ and 
the pair  $(K,\Pi)$ defines
a morphism of vector bundles from $\tau_D$ to $\Pi$.
\item[{\rm(ii)}] For all $(\xi,\eta)\in D\fimes_{\Pi,Z,\Pi}D$ we have $K(\varepsilon(\xi,\eta))=\eta$.
\end{itemize}
\item[\rm{(4)}] One has a linear connection $\Phi\colon D\to Z$, in the sense of Definition~\ref{def11}(2).
\end{itemize}
The relationship between the above mappings is given by the formulas: 
\allowdisplaybreaks
\begin{gather}\label{D11}
\varepsilon\circ\lambda+\gamma\circ(T\Pi)!=\id_{TD};\\ 
\label{D13}
\lambda=(\tau_D,K)\  (\hbox{that is, } K=r_2\circ\lambda) \hbox{ on } TD; \\
\Phi:=\varepsilon\circ \lambda; \nonumber \\
K=r\circ\Phi.\nonumber
\end{gather}  
\end{theorem}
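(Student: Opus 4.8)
The plan is to prove Theorem~\ref{caractVECTOR} in exact parallel with Theorem~\ref{principalCARACT}, reading the four assertions as four mutually equivalent ways of splitting the short exact sequence of vector bundles over $D$ displayed in \eqref{ESvb}. Here $\varepsilon$ plays the role of $\iota$, the map $(T\Pi)!$ the role of $(T\pi)!$, and $D\fimes_Z D$ the role of $P\times\Gg$; indeed $\varepsilon$ is an isomorphism onto $\Vc D=\Ker(T\Pi)$, the map $(T\Pi)!$, $w\mapsto(T\Pi(w),\tau_D(w))$, is a fiberwise surjection onto $TZ\fimes_Z D$, and $\Ker (T\Pi)!=\Ran\varepsilon$, so \eqref{ESvb} is exact. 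The genuinely new feature, absent in the principal case, is that $TD$ carries the \emph{second} vector bundle structure $T\Pi\colon TD\to TZ$, and that a connection $\Phi$ is \emph{linear} precisely when it respects this second structure (Definition~\ref{def11}(2)); this is exactly what the pair of fiberwise-linearity conditions $2^\circ$ (over $\tau_D$) and $3^\circ$ (over $T\Pi$) encode in assertion~(1).

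First I would prove (1)$\Leftrightarrow$(2) verbatim as in Theorem~\ref{principalCARACT}. Given $\gamma$, condition $1^\circ$ says $(T\Pi)!\circ\gamma=\id$, so $\id_{TD}-\gamma\circ(T\Pi)!$ is idempotent with image in $\Ker(T\Pi)!=\Ran\varepsilon$; since $\varepsilon$ is a vector bundle isomorphism onto $\Vc D$, the map $\lambda:=\varepsilon^{-1}\circ(\id_{TD}-\gamma\circ(T\Pi)!)$ is well defined and smooth, and \eqref{D11} holds by construction. Using $(T\Pi)!\circ\varepsilon=0$ one gets $\lambda\circ\varepsilon=\id$, and the fiberwise linearities of $\lambda$ descend from those of $\gamma$, of $\varepsilon^{-1}$, and of $(T\Pi)!$. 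Conversely, from $\lambda$ one forms the complementary idempotent $\id_{TD}-\varepsilon\circ\lambda$, whose image is a fiberwise complement of $\Ran\varepsilon$; since $(T\Pi)!$ is surjective with kernel $\Ran\varepsilon$, it restricts to a fiberwise isomorphism there, and inverting it recovers $\gamma$ with $1^\circ$ and \eqref{D11}.

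Next I would dispatch (2)$\Leftrightarrow$(3) as a mere change of variables. Because $(\lambda,\id_D)$ is a morphism over $D$ into $\pr_1\colon D\fimes_Z D\to D$, its first component is $\pr_1\circ\lambda=\tau_D$, so $\lambda=(\tau_D,K)$ with $K:=r_2\circ\lambda$, which is \eqref{D13}. Then $\Pi\circ K=\Pi\circ\pr_2\circ\lambda=\Pi\circ\pr_1\circ\lambda=\Pi\circ\tau_D$ (using $\Pi\circ\pr_1=\Pi\circ\pr_2$ on $D\fimes_Z D$), giving (3)(i), while $\lambda\circ\varepsilon=\id$ reads $K(\varepsilon(\xi,\eta))=\pr_2(\xi,\eta)=\eta$, which is (3)(ii); the fiberwise linearity of $\lambda$ transcribes into that of $K$. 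The converse is immediate: any $K$ as in (3) makes $\lambda:=(\tau_D,K)$ land in $D\fimes_Z D$ precisely because $\Pi\circ K=\Pi\circ\tau_D$, and (3)(ii) is $\lambda\circ\varepsilon=\id$.

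Finally, for (2)$\Leftrightarrow$(4) I would set $\Phi:=\varepsilon\circ\lambda$. Idempotency $\Phi\circ\Phi=\varepsilon\circ\lambda\circ\varepsilon\circ\lambda=\varepsilon\circ\lambda=\Phi$ follows from $\lambda\circ\varepsilon=\id$; the pair $(\Phi,\id_D)$ is an endomorphism of $\tau_D$ because $\varepsilon$ and $\lambda$ are; and $\Ran\Phi_\xi=\Ran\varepsilon_\xi=\Vc_\xi D=\Ker(T_\xi\Pi)$, so $\Phi$ is a connection in the sense of Definition~\ref{def11}. The last formula then follows from $K=r\circ\Phi=r_2\circ\varepsilon^{-1}\circ\varepsilon\circ\lambda=r_2\circ\lambda$. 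I expect the main obstacle to be the remaining point — that $\Phi$ is not merely a connection but a \emph{linear} one, i.e.\ fiberwise linear for $T\Pi\colon TD\to TZ$. Unlike the principal case, where a single $G$-equivariance condition governs the whole diagram, here one must track two compatible vector bundle structures on $TD$ simultaneously and verify that every splitting respects both; concretely, the linearity over $\tau_D$ is what makes $\Phi$ a connection, while the linearity over $T\Pi$ — equivalently condition $3^\circ$ transported through $\varepsilon$ — is the extra input that upgrades it to a linear connection. Checking that $2^\circ$ and $3^\circ$ are \emph{simultaneously} preserved under all the correspondences above is the crux; everything else is the formal diagram chase already carried out for Theorem~\ref{principalCARACT}.
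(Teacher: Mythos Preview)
Your proposal is correct and follows essentially the same route as the paper: split the exact sequence \eqref{ESvb} to pass between $\gamma$ and $\lambda$ via \eqref{D11}, translate $\lambda\leftrightarrow K$ by $\lambda=(\tau_D,K)$, and set $\Phi=\varepsilon\circ\lambda$. The one presentational difference is that the paper packages your ``linearity over $T\Pi$'' observation (condition~$3^\circ$) by saying that $\gamma$ must simultaneously split a \emph{second} short exact sequence of vector bundles, this time over $TZ$ (with $T\Pi$ as the middle bundle), which is exactly the device that carries the linear-connection condition through the correspondences; you have correctly identified this as the only non-formal point.
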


\begin{proof}

$(1)\Leftrightarrow(2)$. Conditions $1^\circ$--$3^\circ$ in (1) are equivalent 
to the fact that the map $\gamma\colon TZ\fimes_Z D\to TD$ defines 
splittings both for the exact sequence of vector bundles 
$$
\pr_1 D\buildrel{\varepsilon}\over\longrightarrow \tau_D\buildrel{(T\Pi)!}\over\longrightarrow \Pi*(\tau_Z)
$$
over $D$, that is (\ref{ESvb}), and the exact sequence of vector bundles 
\begin{equation}
\begin{CD}
TZ\fimes_Z D @>\gamma>> TD @>{\tau_D!}>> D\fimes_Z TZ\\
@VV{\pr_1}V @VV{T\Pi}V @VV{\tau_D^*(\Pi)}V \\
TZ @>{\id_{TZ}}>> TZ @>{\id_{TZ}}>> TZ
\end{CD}
\end{equation}
over $TZ$. 
In fact we have $(T\Pi)!(\xi)=(T\Pi(\xi),\tau_D(\xi))$ 
and $\tau_D!(\xi)=(\tau_D(\xi),T\Pi(\xi))$ for all $\xi\in TD$ 
(see \cite{Pe69}, the end of \cite[Ch.~IV, \S 3]{La01},
and \cite{Va74})

Since $\gamma$ splits the exact sequence of vector bundles~(\ref{ESvb}),
it follows that there exists a unique map
$\lambda\colon TD\to D\fimes_{\Pi,Z,\Pi}D$ such that
$$
\varepsilon\circ\lambda+\gamma\circ(T\Pi)!=\id_{TD},
$$
or, equivalently, it satisfies $\lambda\circ\varepsilon=\id_{D\fimes_{\Pi,Z,\Pi}D}$. As in the case of principal bundles, with $\chi$ and $\psi$, the maps $\gamma$ and $\lambda$ determines each other. 

$(2)\Leftrightarrow(3)$.
For $r_1,r_2$ as prior to the theorem, $\Pi\circ r_2=\Pi\circ r_1$ so the pair $(r_2,\Pi)$ is
a morphism of vector bundles from $\pr_1$ to $\Pi$, 
which is moreover a fiberwise isomorphism. 

Now assume that  
$\lambda\colon TD\to D\fimes_ZD$ is a smooth map such that the pair $(\lambda,\id_D)$ is a morphism of vector bundles 
from $\tau_D\colon TD\to D$ to $\pr_1\colon D\fimes_Z D\to D$. 
Then the map  
$$
K:=r_2\circ\lambda\colon TD\to D.
$$
satisfies the conditions of (3). 

Conversely, for a given map $K$ as in (3) one can define the smooth map 
$$
\lambda\colon TD\to D\fimes_Z D,\quad \lambda(v)=(\tau_E(v),K(v)).
$$
Thus the pair $(\lambda,\id_D)$ is a morphism of vector bundles 
from $\tau_D\colon TD\to D$ to $\pr_1\colon D\fimes_Z D\to D$. 
Moreover, $\lambda$ splits 
the exact sequence of vector bundles~(\ref{ESvb}). 
In fact, for every $(\xi,\eta)\in D\fimes_Z D$ we have 
$$
\lambda(\varepsilon(\xi,\eta))=(\tau_D(\varepsilon(\xi,\eta)),K(\varepsilon(\xi,\eta)))
=(\xi,\eta)
$$
according to condition~(i) in (3). Thus 
$\lambda\circ\varepsilon=\id_{D\fimes_Z D}$.

Let us note also that the above constructions are inverse to one another, 
so that we get a one-to-one correspondence between maps $\lambda$ and maps $K$; cf.~subsection~37.27 in \cite{KM97a}.

$(2)\Leftrightarrow(4)$. This is a matter of fact that the relations 
$$
\Phi:=\varepsilon\circ \lambda\ \hbox{ and } \lambda:=\varepsilon^{-1}\circ\Phi 
$$
establishes the link between the maps $\lambda$ of (2) and the maps $\Phi$ of (4). 

Finally, $r\circ\Phi=r_2\circ\varepsilon^{-1}\circ\varepsilon\circ\lambda=r_2\lambda=K$, and the proof is over.
\end{proof}

\begin{remark}\label{alternacoCON}
\normalfont
The map $\gamma$ given in Theorem \ref{caractVECTOR} (1) is taken as a 
{\it connection} in several papers or books (see for example \cite{Pe69}, 
\cite[Ch.~IV,\S 3]{La01}, and \cite{Va74}). The map $K$ is usually called 
{\it connection map}, or {\it connector}, see \cite[\S 37.27]{KM97a}. 
The above notion of connection map $K$ coincides with the one 
introduced in \cite[page 172]{El67}. 
To explain this, we shall use the notation of the preceding theorem. 
In addition, denote by $\textbf{Z}$ the model space of the manifold $Z$ and  
by $\textbf{E}$ the fiber of $\alpha\colon D\to Z$, 
and let $\varphi\colon U\to\textbf{Z}$ be a local chart of $Z$ 
and $\Psi\colon\Pi^{-1}(U)\to\varphi(U)\times\textbf{E}$ 
a local trivialization of the bundle $\Pi$. 
Then the local representative of a connection 
$\gamma\colon TZ\fimes_Z D\to TD$ looks like this: 
\begin{equation}\label{K12}
(T\Psi\circ\gamma\circ(T\varphi^{-1}\times\Psi^{-1}))((x,y),(x,\xi))
=(x,\xi,y,-\Gamma_\varphi(x)(y,\xi))
\end{equation}
whenever $x\in\varphi(U)\subseteq\textbf{Z}$, $y\in\textbf{Z}$, 
and $\xi\in\textbf{E}$, 
where $\Gamma_\varphi\colon\varphi(U)\to\Bc(\textbf{Z},\textbf{E};\textbf{E})$ 
is the \textit{Christoffel symbol} 
(cf. \cite[page A101]{Pe69}). 
Here we denote by $\Bc(\textbf{B},\textbf{E};\textbf{E})$ the space of continuous 
bilinear maps $\textbf{Z}\times\textbf{E}\to\textbf{E}$.
It then follows by the above local expression of $\gamma$ 
along with the definiton of the connection map $K\colon TE\to E$ that its local expression is 
\begin{equation}\label{K13}
(\Psi\circ K\circ T\Psi^{-1})(x,\xi,y,\eta)
=(x,\eta+\Gamma_\varphi(x)(y,\xi))
\end{equation}
whenever $x\in\varphi(U)\subseteq\textbf{Z}$, $y\in\textbf{Z}$, 
and $\xi,\eta\in\textbf{E}$. 
In fact, we get by (\ref{D11})~and~(\ref{D13}) that 
$$
(\forall v\in TD)\quad 
\varepsilon(\tau_D(v),K(v))=v-\gamma(T\Pi(v),\tau_D(v)).
$$
By expressing this equation in local coordinates we get 
$$
\begin{aligned}
(T\Psi\circ\varepsilon\circ(\Psi^{-1}\times\Psi^{-1}))(x,\xi,\Psi(K(v)))
=&(x,\xi,y,\eta)  \\
 &-(T\Psi\circ\gamma\circ(T\varphi^{-1}\times\Psi^{-1}))((x,y),(x,\xi)) \\
=&(x,\xi,y,\eta) -(x,\xi,y,-\Gamma_\varphi(y,\xi)) \\
=&(x,\xi,0,\eta+\Gamma_\varphi(y,\xi)).  
\end{aligned}
$$
Since $v=(T\Psi^{-1})(x,\xi,y,\eta)$, 
it now follows by \eqref{Epsilon} that 
$(\Psi\circ K\circ T\Psi^{-1})(x,\xi,y,\eta)
=(x,\eta+\Gamma_\varphi(x)(y,\xi))$, as claimed. 
And thus the definition of $K$ in Theorem \ref{caractVECTOR} is indeed equivalent to 
the Definition in \cite[page 172]{El67}. 
\end{remark}

\subsection{Linear connections induced from principal ones}\label{subs2.4}

It is well known that principal connections on principal $G$-bundles $\pi$ induce linear connections on vector bundles $\Pi$ associated with $\pi$ through representations of the structure group $G$. In the infinite dimensional setting, a very suitable induction procedure  
is given in \cite[\S 37.24]{KM97a} in terms of connections $\Phi$. Here, we 
show how such a procedure looks when one takes maps $\chi$ and $\gamma$ as references for connections.

So let $\pi\colon P\to Z$ be a principal Banach $G$-bundle with action 
$\mu\colon P\times G\to P$. 
Let $\rho\colon G\to\Bc(\textbf{V})$ is 
a smooth representation of $G$ by bounded linear operators on 
a Banach space $\textbf{V}$, and denote by 
$$
\Pi\colon D=P\times_G\textbf{V}\to Z,\quad [(u,x)]\mapsto\pi(u)$$
the \textit{associated vector bundle} 
(see \cite[subsect. 6.5]{Bo67} and \cite[subsect. 37.12]{KM97a}). 
Recall that $P\times_G\textbf{V}$ denotes the quotient of 
$P\times\textbf{V}$ with respect to the equivalence relation defined by 
$$
(\forall g\in G)\quad (u,x)\sim(\mu(u,g),\rho(g^{-1})x)=:\bar{\mu}(g)(u,x)$$
whenever $(u,x)\in P\times\textbf{V}$, 
and we denote by $[(u,x)]$ the equivalence class of any pair $(u,x)$. 
In this way, $\Pi\colon D\to Z$ is a vector $G$-bundle. 

Furthermore, the tangent manifold $TG$ is the semidirect product of groups 
$TG\equiv G \ltimes_{\Ad_{G}}\Gg$ defined by the adjoint action of $G$ on $\Gg$; 
see \cite[Cor. 38.10]{KM97a}. The
multiplication in the group $TG$ is given by 
$$
(g_1,X_1)(g_2,X_2)=(g_1 g_2,\Ad_{G}(g_2^{-1})X_1+X_2),
\quad (g_1,g_2\in G; X_1,X_2\in\Gg).
$$
Then the tangent bundle $T\pi\colon TP\to TZ$ is a principal bundle with the structure 
group $TG$ and right action $T\mu\colon T\Pc\times TG\to T\Pc$  
(\cite[Th. 37.18(1)]{KM97a}), and the tangent map of the representation $\rho$ can be viewed as the smooth representation
$T\rho\colon G\ltimes_{\Ad_{G}}\Gg\to\Bc(\textbf{V}\oplus\textbf{V})$ defined by
$$
(g,X)\mapsto 
\begin{pmatrix}
\rho(g)&0\cr 0 & \rho(g)\end{pmatrix}
\begin{pmatrix} 1& 0 \cr d\rho(X)  & 1\end{pmatrix}
=\begin{pmatrix} \rho(g)& 0 \cr \rho(g) d\rho(X) & \rho(g)
\end{pmatrix},
$$  
where the resulting matrix is to be understood as acting on vectors of $\textbf{V}\oplus\textbf{V}$ written in column form.
Using the representation $T\rho$, the tangent bundle  
of the vector bundle $\Pi\colon D=\Pc\times_{G}\textbf{E}\to Z$  
can be described as the vector bundle 
$$
\tau_D\colon TD=TP\times_{G\ltimes_{\Ad_{G}}\Gg}(\textbf{V}\dot+\textbf{V})
\to P\times_{G}\textbf{V}=D,
$$
which is associated to the principal bundle $T\pi\colon TP\to TZ$ and is defined by 
$$
\tau_D\colon [(v_u,(x,y))]\mapsto [(u,x)] \qquad (v_u\in T_u P; x,y\in\bf V)
$$ 
(\cite[Th. 37.18(4)]{KM97a}).

Let now $\chi\colon P\fimes_Z TZ\to TP$ be a connection 
on the principal bundle $\pi\colon P\to Z$. 
Denote by 
$$
q\colon P\times\textbf{V}\to P\times_G\textbf{V}=D,
\quad (u,x)\mapsto[(u,x)]
$$
the quotient map, and define 
$$
j\colon\textbf{V}\to\textbf{V}\times\textbf{V},\quad x\mapsto(x,0).
$$
Then the composition 
$$
(P\fimes_Z TZ)\times\textbf{V}
\mathop{\longrightarrow}^{\chi\times j}
TP\times(\textbf{V}\times\textbf{V})=T(P\times\textbf{V})
\mathop{\longrightarrow}^{Tq}TD
$$
has the property 
$$
\begin{aligned}
Tq((\chi\times j)((\mu(u,g),v),\rho(h^{-1})x))
 &=Tq(\chi(\mu(u,g),v),\rho(g^{-1})x,0) \\
 &=Tq((T\mu(\cdot,g)\circ\chi)(u,v),\rho(g^{-1})x,0) \\
 &=T(q\circ\bar{\mu}(g))(\chi(u,v),x,0) \\
 &=Tq(\chi(u,v),x,0) \\
 &=Tq((\chi\times j)((u,v),x))
\end{aligned}
$$
whenever $g\in G$, $x\in\textbf{V}$ and $(u,v)\in TZ$. 
(Note that the second of the above equalities follows by 
the $G$-equivariance property of $\chi$. 
We also note that a similar calculation 
can be found  in \cite[subsect. 37.24]{KM97a}.) 
Consequently the map 
$$
\gamma\colon TZ\fimes_Z D\to TD,
\quad (v,[(u,x)])\mapsto Tq((\chi\times j)((u,v),x))
$$
is well defined. 
Since the quotient map 
$$
(P\fimes_Z TZ)\times\textbf{V} \to 
TZ\fimes_Z(P\times_G\textbf{V}) =TZ\fimes_Z D, 
\quad ((u,v),x)\mapsto(v,[(u,x)])
$$
is a submersion and $Tq\circ(\chi\times j)$ is smooth, it follows by 
Corollary~4.8 in \cite{Up85} 
that $\gamma$ is smooth. 
Moreover, it is easy to see that conditions $1^\circ$--$3^\circ$ 
in Theorem~\ref{caractVECTOR}(3) are satisfied, 
hence $\gamma\colon TZ\fimes_Z D\to TD$ is actually a connection 
on the vector bundle $\Pi\colon D\to Z$, 
and it was constructed 
out of the connection $\chi\colon P\fimes_Z TZ\to TP$ 
on the principal bundle $\pi\colon P\to Z$. 

For the sake of completeness let us recall that every vector bundle 
is associated with its frame bundle, which is a principal bundle 
(see \cite{Bo67} and \cite[Sect. 2]{Va74}). 
In this special case the above construction can be found in \cite[Sect. 1]{Pe69}.

\subsection{Covariant derivatives}

Whereas a connection on a fiber bundle is introduced by algebraic tools, as a way to make differentation between different tangent spaces possible (through a suitable smooth family of horizontal tangent subspaces), the differential calculus in itself, on such a bundle, relies on the notion of the covariant derivative associated to the connection. We are not dealing with covariant derivatives in detail, but it sounds sensible to recall its definition and some properties.

Let $\Phi$, $K$ be a linear connection on a Banach vector bundle $\Pi$ and its corresponding connection map, respectively.  
Let $\Omega^1(Z,D)$ denote the space of locally defined 
smooth differential 1-forms on $Z$ with values in the bundle $\Pi\colon D\to Z$,  
and let $\Omega^0(Z,D)$ denote the space of locally defined smooth sections of the vector bundle~$\Pi$. 

The \textit{covariant derivative} for $\Phi$, or alternatively for $K$, 
is the linear mapping  
$\nabla\colon\Omega^0(Z,D)\to\Omega^1(Z,D)$, defined for every $\sigma\in\Omega^0(Z,D)$ 
by the composition 
$$
\nabla\sigma\colon  
TZ\mathop{\longrightarrow}\limits^{T\sigma} TD\mathop{\longrightarrow}\limits^{\Phi} 
\Vc D\mathop{\longrightarrow}\limits^{r}D
$$
that is, $\nabla\sigma=(r\circ\Phi)\circ T\sigma=K\circ T\sigma$. 

Covariant derivatives on Banach bundles enjoy many of the properties of covariant derivatives in finite dimensions. For instance, $\nabla$ can be locally expressed in terms of connection forms $\omega\in\Omega^1(P,\Gg)$ when the vector bundle is associated with a principal bundle (via a representation of the structure group). In this way covariant derivatives and their compatibility with Hermitian and complex structures, as well as the positivity of their curvature forms, on Banach vector bundles have been studied in \cite{BG14}, \cite{BG15a}, \cite{BG15b} with particular attention on the geometry of reproducing kernels.

\subsection{Pull-backs of connections}\label{sect2}

Pull-backs of connections on various types of finite-dimensional bundles
have been studied in several papers; see for instance
\cite{NR61}, \cite{NR63},
\cite{Le68},
\cite{Sch80}, 
\cite{PR86}.

In \cite[Prop. 6.6]{BG14}, a result on pull-back of connections $\Phi$ on Banach vector bundles is given, which is suitable for application in the study of differential geometric properties of reproducing kernels (see also \cite{BG15a}, \cite{BG15b}). 
Unlike most constructions of the pull-backs of connections in the literature, 
the method provided in \cite{BG14} is more direct in the sense that it requires 
neither the connection map, nor any connection forms, nor the covariant derivative, 
but rather the connection $\Phi$ itself. However, we wish here to show another type of pull-back operation relying on the connectors associated with connections, which also works in infinite dimensions, since connectors are tools frequently used in applications.

\begin{proposition}\label{C9}
Let $\Pi\colon D\to Z$ and 
$\widetilde{\Pi}\colon\widetilde{D}\to\widetilde{Z}$ 
be vector bundles and $\Delta=(\delta,\zeta)$ a 
morphism of vector bundles from $\Pi$ to $\widetilde{\Pi}$. 
If the mapping $\delta\colon D\to\widetilde{D}$ is 
a fiberwise isomorphism, then for every connection map 
$\widetilde{K}$ on the vector bundle $\widetilde{\Pi}$ 
there exists a unique connection map $K$ on the vector bundle $\Pi$ 
such that the diagram 
\begin{equation}\label{D14}
\begin{CD}
TD @>{T\delta}>> T\widetilde{D} \\
@V{K}VV @VV{\widetilde{K}}V \\
D @>{\delta}>> \widetilde{D}
\end{CD}
\end{equation}
is commutative. 
\end{proposition}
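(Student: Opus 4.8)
The plan is to read $K$ off from the commutativity requirement itself: the square \eqref{D14} forces $\delta(K(v))=\widetilde K(T\delta(v))$ for every $v\in TD$, and since $\delta$ is fiberwise injective this determines $K(v)$ uniquely, provided $\widetilde K(T\delta(v))$ lands in the fiber on which $\delta$ can be inverted. This gives uniqueness at once, so the real content is existence and smoothness. First I would do the base-point bookkeeping. For $v\in T_\xi D$ one has $\tau_{\widetilde D}(T\delta(v))=\delta(\tau_D(v))=\delta(\xi)$ by naturality of the tangent functor, whence
$$
\widetilde\Pi\bigl(\widetilde K(T\delta(v))\bigr)=\widetilde\Pi\bigl(\tau_{\widetilde D}(T\delta(v))\bigr)=\widetilde\Pi(\delta(\xi))=\zeta(\Pi(\xi)),
$$
using property (i) of the connection map $\widetilde K$ from Theorem~\ref{caractVECTOR}(3) and the morphism identity $\widetilde\Pi\circ\delta=\zeta\circ\Pi$. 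Thus $\widetilde K(T\delta(v))$ and $\delta(\xi)$ lie in the same fiber $\widetilde D_{\zeta(\Pi(\xi))}$, on which $\delta$ restricts to a linear isomorphism.

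To manufacture a \emph{smooth} $K$ I would factor $\delta$ through the pull-back bundle $\zeta^*(\widetilde\Pi)\colon Z\fimes_{\zeta,\widetilde Z,\widetilde\Pi}\widetilde D\to Z$. The morphism hypothesis makes $\delta!\colon D\to Z\fimes_{\zeta,\widetilde Z,\widetilde\Pi}\widetilde D$, $\xi\mapsto(\Pi(\xi),\delta(\xi))$, a morphism of vector bundles over $\id_Z$; because $\delta$ is a fiberwise isomorphism, $\delta!$ is a fiberwise isomorphism over the identity, hence a vector bundle isomorphism. Next I would define $\Lambda\colon TD\to Z\fimes_{\zeta,\widetilde Z,\widetilde\Pi}\widetilde D$ by $\Lambda(v)=(\Pi(\tau_D(v)),\widetilde K(T\delta(v)))$; the computation above shows $\Lambda$ lands in the fiber product, and it is smooth as a map into the pull-back since its two components are smooth. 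Then I would set $K:=(\delta!)^{-1}\circ\Lambda$. By construction $\delta(K(v))=\widetilde K(T\delta(v))$ and $\Pi(K(v))=\Pi(\tau_D(v))$, which is precisely the commutativity of \eqref{D14} together with the base identity in part (i) of the connection-map definition.

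It then remains to verify that $K$ is genuinely a connection map in the sense of Theorem~\ref{caractVECTOR}(3). Fiberwise linearity of $K\vert_{T_\xi D}$ follows by composing the linear maps $T_\xi\delta$, the fiberwise-linear $\widetilde K$, and the fiberwise inverse of $\delta$, so $(K,\Pi)$ is a morphism of vector bundles from $\tau_D$ to $\Pi$, giving (i). For (ii), given $(\xi,\eta)\in D\fimes_{\Pi,Z,\Pi}D$, fiberwise linearity of $\delta$ yields $\delta(\xi+t\eta)=\delta(\xi)+t\delta(\eta)$, hence $T\delta(\varepsilon(\xi,\eta))=\widetilde\varepsilon(\delta(\xi),\delta(\eta))$, where $\widetilde\varepsilon$ is the analogue of \eqref{Epsilon} for $\widetilde\Pi$; applying property (ii) of $\widetilde K$ gives $\widetilde K(T\delta(\varepsilon(\xi,\eta)))=\delta(\eta)$, and fiberwise injectivity of $\delta$ then forces $K(\varepsilon(\xi,\eta))=\eta$. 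The one step that is not purely formal, and which I expect to be the main obstacle, is the smoothness of $(\delta!)^{-1}$: this is the assertion that a morphism of Banach vector bundles over $\id_Z$ which is a fiberwise topological isomorphism is an isomorphism with smooth inverse. In local trivializations $\delta!$ takes the form $(z,x)\mapsto(z,A(z)x)$ with $A(z)$ invertible and depending smoothly on $z$, so smoothness of $(\delta!)^{-1}$ reduces to smoothness of $z\mapsto A(z)^{-1}$, that is, to smoothness of inversion on the group of invertible bounded operators.
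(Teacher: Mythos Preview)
Your argument is correct and follows the paper closely for uniqueness and for the verification of condition~(ii): both you and the paper compute $T\delta\circ\varepsilon=\widetilde\varepsilon\circ(\delta\times\delta)$ from fiberwise linearity of $\delta$ and deduce $K(\varepsilon(\xi,\eta))=\eta$ by fiberwise injectivity of $\delta$.

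The genuine difference is in the smoothness step. You argue globally: factor $\delta$ through the pull-back $\zeta^*(\widetilde\Pi)$, observe that $\delta!$ is a vector-bundle isomorphism over $\id_Z$, and write $K=(\delta!)^{-1}\circ\Lambda$; smoothness then reduces, as you correctly isolate, to smoothness of inversion in $\GL$ of a Banach space. The paper instead proves smoothness by direct local computation: writing $\delta$ locally as $(x,\xi)\mapsto(\zeta_{\phi,\widetilde\phi}(x),d(x)\xi)$ and plugging into the local expression \eqref{K13} for $\widetilde K$, it obtains the explicit formula
\[
K_\Psi(x,\xi,y,\eta)=\eta+d(x)^{-1}\widetilde\Gamma(\zeta_{\phi,\widetilde\phi}(x))\bigl((\zeta_{\phi,\widetilde\phi})'(x)y,\,d(x)\xi\bigr),
\]
which visibly depends smoothly on all variables. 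Your route is cleaner and coordinate-free; the paper's route has the payoff of an explicit Christoffel symbol for the pulled-back connection, which is useful in applications (and is in fact invoked later to connect with the definition in \cite{El67}).
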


\begin{definition}\label{C10}
\normalfont
In the setting of Proposition~\ref{C9} 
the connection map $K$ is called the \textit{pull-back} 
of $\widetilde{K}$ by $\Delta$ and 
we denote $K:=\Delta^*(\widetilde{K})$. 
Also, if $\gamma$ and $\widetilde{\gamma}$ are the connections 
associated with the connection maps $K$ and $\widetilde{K}$, 
respectively, 
then we say that $\gamma$ is the \textit{pull-back} 
of $\widetilde{\gamma}$ by $\Delta$ and 
we denote $\gamma:=\Delta^*(\widetilde{\gamma})$.
\end{definition}

\begin{proof}[Proof of {\rm Proposition~\ref{C9}}]
Since $\delta\colon D\to\widetilde{D}$ is a fiberwise isomorphism, 
the diagram from the statement is commutative if and only if we define 
\begin{equation}\label{D14.5}
(\forall z\in Z)(\forall v\in T_zE)\quad 
K(v):=(\delta|_{D_z})^{-1}(\widetilde{K}(T\delta(v))).
\end{equation}
Thus we get a map $K\colon TE\to E$ that satisfies condition~(i) 
in Theorem~\ref{caractVECTOR}(3). 
It remains to show that condition~(ii) 
in Theorem~\ref{caractVECTOR}(3) is satisfied as well and that $K$ is smooth. 

To this end recall the embedding 
$$
\varepsilon\colon D\fimes_Z D\to TD, \quad (\xi,\eta)\mapsto 
\dot{c}_{\xi,\eta}(0)
$$
as the vertical subbundle of $TD$, 
and let 
$$
\widetilde{\varepsilon}\colon
\widetilde{D}\fimes_{\widetilde{Z}}\widetilde{D}\to 
T\widetilde{D}, \quad (\widetilde{\xi},\widetilde{\eta})\mapsto 
\dot{c}_{\widetilde{\xi},\widetilde{\eta}}(0)
$$
be the similar map associated with the vector bundle 
$\widetilde{\Pi}\colon\widetilde{D}\to\widetilde{Z}$. 
Since the mapping $\delta$ is fiberwise linear, it follows that 
for arbitrary $(\xi,\eta)\in D\fimes_Z D$ we have 
$$
(\forall t\in{\mathbb R})\quad 
\delta(c_{\widetilde{\xi},\widetilde{\eta}}(t))
=\delta(\widetilde{\xi}+t\widetilde{\eta})
=\delta(\widetilde{\xi})+t\delta(\widetilde{\eta})
=c_{\delta(\widetilde{\xi}),\delta(\widetilde{\eta})}(t),$$
whence $T\delta(\dot{c}_{\xi,\eta}(0))
=\dot{c}_{T\delta(\xi),T\delta(\eta)}(0)$. 
In other words the diagram 
\begin{equation}\label{D15}
\begin{CD}
D\fimes_Z D @>{\delta\times\delta}>> 
   \widetilde{D}\fimes_{\widetilde{Z}}\widetilde{D} \\
@V{\varepsilon}VV @VV{\widetilde{\varepsilon}}V \\
TD @>{T\delta}>> T\widetilde{D}
\end{CD}
\end{equation}
is commutative. 
Then for arbitrary $(\xi,\eta)\in D\fimes_Z D$ we have 
$$
\delta(K(\varepsilon(\xi,\eta)))
\mathop{=}^{\eqref{D14}}
  \widetilde{K}(T\delta(\varepsilon(\xi,\eta)))
\mathop{=}^{\eqref{D15}}
  \widetilde{K}(\widetilde{\varepsilon}(\delta(\xi),\delta(\eta)))
=\delta(\eta).
$$
The last of the above equalities follows since $\widetilde{K}$ 
is a connection map hence satisfies 
condition~(ii) in Theorem~\ref{caractVECTOR}(3). 
Since $\delta\colon D\to\widetilde{D}$ is fiberwise injective, 
we obtain $K(\varepsilon(\xi,\eta))=\eta$, and thus $K$ in turn satisfies 
condition~(ii) in Theorem~\ref{caractVECTOR}(3). 

Finally, we will prove that the map $K$ is smooth 
by computing its expression in local coordinates. 
Let $\textbf{Z}$ be the model space of the manifold $Z$ and  
by $\textbf{E}$ the fiber of $\Pi\colon D\to Z$, 
and let $\varphi\colon U\to\textbf{Z}$ be a local chart of $Z$ 
and $\Psi\colon\Pi^{-1}(U)\to\varphi(U)\times\textbf{Z}$ 
a local trivialization of the bundle $\alpha$. 
Similarly, let $\widetilde{\textbf{Z}}$ 
be the model space of the manifold $\widetilde{Z}$,   
$\widetilde{\textbf{E}}$ be the fiber of 
$\widetilde{\Pi}\colon\widetilde{D}\to\widetilde{Z}$, 
$\widetilde{\varphi}\colon\widetilde{U}\to\widetilde{\textbf{Z}}$ 
be a local chart of $\widetilde{Z}$,  
and $\widetilde{\Psi}\colon\widetilde{\Pi}^{-1}(\widetilde{U})\to
\widetilde{\varphi}(\widetilde{U})\times\widetilde{\textbf{E}}$ 
a local trivialization of the bundle $\widetilde{\Pi}$. 
Also denote by 
$\widetilde{\Gamma}\colon\widetilde{U}\to 
\Bc(\widetilde{\textbf{Z}},\widetilde{\textbf{E}};\widetilde{\textbf{E}})$ 
the corresponding Christoffel symbol of $\widetilde{K}$. 
Since the mapping $\zeta\colon Z\to\widetilde{Z}$ is continuous, 
we may assume that $U$ is small enough such that 
$\zeta(U)\subseteq\widetilde{U}$. 

Since the pair $\Delta=(\delta,\zeta)$ is a morphism of vector bundles, 
it follows that there exists a smooth mapping 
$d\colon U\to\Bc(\textbf{E},\widetilde{\textbf{E}})$ 
such that 
\begin{equation}\label{D16}
(\widetilde{\Psi}\circ\delta\circ\Psi^{-1})(x,\xi)
=(\zeta_{\phi,\widetilde{\phi}}(x),\,d(x)\xi)
\quad \text{ whenever }(x,\xi)\in U\times\textbf{E}.
\end{equation}
Using the notation $K_{\Psi}=\Psi\circ K\circ T\Psi^{-1}$, 
$\widetilde{K}_{\widetilde{\Psi}}
=\widetilde{\Psi}\circ\widetilde{K}\circ T\widetilde{\Psi}^{-1}$, 
and 
$\zeta_{\phi,\widetilde{\phi}}=\widetilde{\phi}\circ\zeta\circ\phi^{-1}$ 
we obtain 
\allowdisplaybreaks
\begin{align}
(\zeta_{\phi,\widetilde{\phi}}(x),\,d(x)K_{\Psi}(x,\xi,y,\eta))
&\mathop{=}^{\eqref{D16}}
 (\widetilde{\Psi}\circ\delta\circ\Psi^{-1})(x,K_{\Psi}(x,\xi,y,\eta)) \nonumber\\
&\mathop{=}^{\eqref{D14}}
 \widetilde{K}_{\widetilde{\Psi}}
((T\widetilde{\Psi}\circ T\delta\circ T\Psi^{-1})(x,\xi,y,\eta)) \nonumber\\
&\mathop{=}^{\eqref{D16}}
 \widetilde{K}_{\widetilde{\Psi}}
 (\zeta_{\phi,\widetilde{\phi}}(x),\,d(x)\xi,\, 
    (\zeta_{\phi,\widetilde{\phi}})'(x)y,\,d(x)\eta) \nonumber\\
&\mathop{=}^{\eqref{D13}}
 (\zeta_{\phi,\widetilde{\phi}}(x),\,
 d(x)\eta+\widetilde{\Gamma}(\zeta_{\phi,\widetilde{\phi}}(x))
 ((\zeta_{\phi,\widetilde{\phi}})'(x)y,d(x)\xi)), \nonumber
\end{align}
hence for all 
$x\in\varphi(U)\subseteq\textbf{Z}$, 
$y\in\textbf{Z}$, and 
$\xi,\eta\in\textbf{E}$ we get 
\begin{equation}\label{D17}
K_{\Psi}(x,\xi,y,\eta)=\eta
+d(x)^{-1}\widetilde{\Gamma}(\zeta_{\phi,\widetilde{\phi}}(x))
 ((\zeta_{\phi,\widetilde{\phi}})'(x)y,d(x)\xi). 
\end{equation}
This expression of $K$ in local coordinates shows 
that $K$ is indeed a smooth map. 
\end{proof}

\begin{remark}
\normalfont
By using Theorem~\ref{caractVECTOR} $(2)\Leftrightarrow(3)$, formula~\eqref{D17} and the computation 
in \cite[page 172]{El67} 
we obtain an alternative proof of the fact that the mapping $K\colon TE\to E$ 
defined by~\eqref{D14.5} is a connection map. 
A special case of this reasoning can be found at the end of \cite[Sect. 2]{El67}.  
\end{remark}

\begin{remark}\label{deriv_prop1}
\normalfont
It is possible to establish a result for pull-backs of covariant derivatives as follows. Let $\Pi\colon D\to Z$ and $\widetilde{\Pi}\colon\widetilde{D}\to\widetilde{Z}$ be vector bundles 
endowed with the linear connections $\Phi$ and $\widetilde{\Phi}$, 
with the corresponding covariant derivatives $\nabla$ and $\widetilde{\nabla}$, 
respectively. 
Assume that $\Theta=(\delta,\zeta)$ is a homomorphism of vector bundles from $\Pi$ into $\widetilde{\Pi}$ 
such that $T\delta\circ\Phi=\widetilde{\Phi}\circ T\delta$. 
If $\sigma\in\Omega^0(Z,D)$ and $\widetilde{\sigma}\in\Omega^0(\widetilde{Z},\widetilde{D})$ are 
such that $\delta\circ\sigma=\widetilde{\sigma}\circ\zeta$, 
then $\delta\circ K\circ T\sigma=\widetilde{K}\circ T\widetilde{\sigma}\circ T\zeta$, that is, $\delta\circ\nabla\sigma=\widetilde{\nabla}\widetilde{\sigma}\circ T\zeta$; see details in \cite{BG14}. 
\end{remark}

\section{Stiefel bundles on flag manifolds in Banach algebras}\label{Sect3}

Let $\Ac$ be a complex Banach algebra with unit $\1$ and endowed with a continuous involution $a\mapsto a^*$. 
Let $\Ac^\times$ denote the open set of invertible elements of $\Ac$. We will assume in all of this section that $\Ac$ 
is also {\it hermitian}; that is, $\sigma(a)\subset\R$ whenever $a=a^*\in\Ac$. Here, $\sigma(a)$ is the spectrum of $a$. 
Examples of hermitian Banach algebras are the $C^*$-algebras and group algebras on many locally compact groups.

Put $\Pc(\Ac):=\{p\in\Ac: p=p^2\}$ and $\widehat p=\1-p$, for every $p\in\Pc(\Ac)$. 
For $p,q\in\Pc(\Ac)$, the notation $p\le q$ means that $qp=p$, and $p<q$ means that $p\le q$ with $p\not=q$. 
An equivalence relation $\sim$ can be defined in $\Pc(\Ac)$ by
$$ 
p\sim q \Leftrightarrow p\le q \hbox{ and } q\le p.
$$
The corresponding quotient set is denoted by $\Gr(\Ac):=\Pc(\Ac)/\sim$ and 
for arbitrary $p\in\Pc(\Ac)$ we denote its equivalence class by $[p]\in\Gr(\Ac)$. 
The assumption that $\Ac$ is a hermitian $*$-algebra implies that the subset of orthogonal projections  
$$\Pc_\perp(\Ac):=\{p\in\Pc(\Ac)\mid p=p^*\}$$ 
is a cross-section of the above equivalence relation, 
that is, every element of $\Pc(\Ac)$ is equivalent to exactly one element of $\Pc_\perp(\Ac)$ 
(see \cite[Lemma 4.4(2)]{BN10}). 
One thus obtains the canonical bijection 
\begin{equation}\label{straight}
\Pc_\perp(\Ac)\to\Gr(\Ac),\quad p\mapsto[p].
\end{equation} 
The left action 
$g\cdot[p]:=[gpg^{-1}]$ of $g\in\Ac^\times$ on $[p]\in\Gr(\Ac)$ is well defined. 
Then the {\it Grassmann manifold}, or just {\it Grassmannian}, $\Gr(p,\Ac)$ at $p\in\Pc(\Ac)$ is the complex homogeneous space 
$$
\Gr(p,\Ac):=\{[gpg^{-1}]: g\in\Ac^\times\} \simeq\Ac^\times/ \Delta(p)^\times
$$
where 
$$\Delta(p)^\times
:=\{g\in \Ac^\times: [gpg^{-1}]=[p]\}
=\{g\in \Ac^\times: \widehat pgp=\widehat pg^{-1}p=0\}$$ 
is a Banach-Lie subgroup of $\Ac^\times$. 
As a Banach manifold, $\Gr(p,\Ac)$ is modeled on its tangent space $\widehat p\Ac p$ at $p$. 

For the following facts we refer the reader to \cite{DG01}; see also 
\cite{MS97}, \cite{BN10}. 
The Stiefel bundle on $\Gr(p,\Ac)$ is the principal bundle 
$\sigma\colon\Vc(p,\Ac)\to\Gr(p,\Ac)$ 
whose total space 
is the complex Banach manifold 
$$
\begin{aligned}
\Vc(p,\Ac):=
&\{v\in\Ac: (\exists a\in\Ac)\ ava=a,\ vav=v,\ av=p,\ va\in\Gr(p,\Ac)\}.  
\end{aligned}
$$
If $v\in\Vc(p,\Ac)$, then $\sigma(v)\in\Gr(p,A)$ is defined by 
$[p]=\sigma(v)$, where $p\in\Pc_\perp(\Ac)$ is uniquely determined via the bijection~\eqref{straight} by the conditions 
$$
p=p^2=p^*,\ (va)p=p,\text{ and }p(va)=va. 
$$  
Then $\Vc(p,\Ac)$ is acted on the right by the group $(p\Ac p)^\times$, which turns out to be the structure group of the principal bundle $\sigma$, so that 
$\Gr(p,\Ac)\simeq\Vc(p,\Ac)/(p\Ac p)^\times$. 
Moreover, the holomorphic action $\Ac^\times\times\Vc(p,\Ac)\to\Vc(p,\Ac)$ given by multiplication is transitive, see \cite[Lemma 5.1 and Prop. 5.3]{DG01}. Hence it yields the identity
$\Vc(p,\Ac)=\{ g p:g\in\Ac^\times\}$ and the  biholomorphic $\Ac^\times$-equivariant diffeomorphism 
$\Vc(p,\Ac)\simeq\Ac^\times/\Ac_p^\times$, where
$\Ac_p^\times:=\{g\in\Ac^\times:gp=p\}$ is the isotropy group at $p$. In these forms, $\Vc(p,\Ac)$ is suitably handled.

We next generalize the Stiefel bundle by replacing the Grassmann manifold with flag manifolds.

\subsection{Flag manifolds}

For every integer $n\ge1$, let
$\delta=(p_1,\dots,p_n)$ be a finite, 
totally ordered family 
$0=p_0<p_1<\dots<p_n<p_{n+1}=\1$
of elements in $\Pc(\Ac)$. 
Set 
$\widehat\delta:=(\widehat p_n,\dots\widehat p_1)$.

Let $\Pc_n(\Ac)$ be the set of all such families. Then we define
$$
(p_1,\dots,p_n)\sim(q_1,\dots,q_n)\Longleftrightarrow [p_j]=[q_j]\, (j=1,\dots,n).
$$
Then the above is an equivalence relation. 
The corresponding quotient set is denoted by $\Fl_{\Ac}(n)$, 
and the equivalence class of $\delta=(p_1,\dots,p_n)\in\Pc_n(\Ac)$ is denoted by $[\delta]\in\Fl_{\Ac}(n)$. 
Generalizing the case of Grassmannians, $\Ac^\times$ acts on $\Pc_n(\Ac)$ by
$g\cdot[(p_1,\dots,p_n)]:=[(gp_1g^{-1},\dots,gp_ng^{-1})]$, for $g\in\Ac^\times$, 
$[(p_1,\dots,p_n)]\in\Fl_{\Ac}(n)$. 
Then, for fixed $[\delta]\in\Fl_{\Ac}(n)$, the {\it flag manifold} at $[\delta]$ is the $\Ac^\times$-orbit
$$
\Fl_{\Ac}([\delta]):=\{g\cdot [\delta]:g\in\Ac^\times\}
$$
Clearly, $\Fl_{\Ac}([\delta])=\Gr(p,\Ac)$ when $n=1$ and $p=p_1$. 
Similarly to that case, 
we have the $\Ac^\times$-equivariant bijection
$$
\Fl_{\Ac}([\delta])\simeq\Ac^\times/\Delta(\delta)^\times,
$$
where $\Delta(\delta)^\times$ is the group of invertible elements of the non-self-adjoint subalgebra of $\Ac$,
$$
\Delta(\delta):=\{a\in\Ac:\widehat p_jap_j=0\ (j=1,\dots,n)\}.
$$ 
In fact, the above $\Ac^\times$-equivariant bijection is a diffeomorphism of Banach manifolds where the topology in $\Fl_{\Ac}([\delta])$ coincides with the quotient topology induced from $\Ac$. 
This is proven in \cite{BN10}, where the following notions are introduced accordingly.

Let $\Phi_\delta\colon\Ac\to\Ac$ be the {\it diagonal truncation} on $\Ac$ defined by
$$
\Phi_\delta(x):=\sum_{j=1}^{n+1}(p_j-p_{j-1})x(p_j-p_{j-1}).
$$
Then it is readily seen that 
$\Phi_\delta$ is a continuous idempotent mapping with range 
equal to 
$D(\delta):=\{x\in\Ac:xp_j=p_jx\ (j=1,\dots,n)\}$, and that  $\Phi_\delta=\Phi_{\widehat\delta}$. 
Also, the restriction of $\Phi_\delta$ to $\Delta(\delta)$ is multiplicative. 

Denote 
$N(\delta):=\Delta(\delta)^\times\cap(\Phi_\delta)^{-1}(\1)$, 
which is a Banach-Lie subgroup of $\Delta(\delta)^\times$. 
The flag manifold $\Fl_{\Ac}([\delta])$ is modeled on the tangent space of $N(\widehat\delta)$. 
This fact is obtained from the unique decomposition
\begin{equation}\label{decomposition}
\Omega_\delta=N(\widehat\delta)\Delta(\delta)^\times
\left(=N(\widehat\delta)D(\delta)^\times N(\delta)\right)
\end{equation}
given by \cite[Rem. 4.2]{BN10}, together with some other facts, 
(see \cite[Lemma A1, Th. 4.3]{BN10}). 
We note that the matrix decomposition given in \cite[Rem. 4.2]{BN10}, 
should actually read as follows: 
for $g\in\Ac^\times$ with $pgp\in(p\Ac p)^\times$ and $p\in\Pc(\Ac)$, one has 
$$g=\begin{pmatrix}
p & 0 \\
\widehat pg(pgp)^{-1} & \widehat p
\end{pmatrix}
\begin{pmatrix}
pgp & 0 \\
0 & g-g(pgp)^{-1}g
\end{pmatrix}
\begin{pmatrix}
p & (pgp)^{-1}g\widehat p \\
0 & \widehat p
\end{pmatrix}
$$
where $(pgp)^{-1}$ is the inverse of $pgp$ in $(p\Ac p)^\times$. 
(For a general $n$-uple $\delta$ one proceeds by induction; see \cite{BN10}.)

The above decomposition \eqref{decomposition} also 
shows that 
$\Ac^\times\to\Fl_{\Ac}([\delta])$ is a principal bundle with structure group $\Delta(\delta)^\times$. 
Here, 
$\Omega_\delta
:=\{g\in\Ac^\times:p_jgp_j\in(p_j\Ac p_j)^\times \hbox{ for } j=1,\dots,n\}$ is an open subset of $\Ac^\times$ with $\1\in\Omega_\delta$. 
(Note that the unit of $p\Ac p$ is $p$, for all $p\in\Pc(\Ac)$.) 

From \eqref{decomposition} one directly obtains 
\begin{equation}\label{directsum}
\Ac=TN(\widehat\delta)\dot+\Delta(\delta)
=TN(\widehat\delta)\dot+D(\delta)\dot+ TN(\delta)
\end{equation}
where $\Ac$, $TN(\widehat\delta)$, $\Delta(\delta)$, $TN(\delta)$ are seen as the tangent spaces (at \1) of 
the groups $\Ac^\times$, $N(\widehat\delta)$, $\Delta(\delta)^\times$, $N(\delta)$ respectively.

Let us now have a closer look at \eqref{directsum}. 
First, we describe the elements of $N(\delta)$ in some detail. 

\begin{lemma}\label{Ndelta}
In the above setting,
$$
g\in N(\delta)\Longleftrightarrow 
\widehat p_{j-1}gp_j=\widehat p_{j-1}p_j\ (j=1,\dots,n, n+1).
$$
\end{lemma}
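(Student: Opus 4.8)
The plan is to pass to the Peirce (block-matrix) decomposition attached to the chain and reduce both sides of the asserted equivalence to the single statement that $g-\1$ is strictly block-upper-triangular. Write $e_j:=p_j-p_{j-1}$ for $j=1,\dots,n+1$. Because the diagonal truncation $\Phi_\delta(x)=\sum_{j=1}^{n+1}e_jxe_j$ is an idempotent map (as recorded above, following \cite{BN10}), the $e_j$ are mutually orthogonal idempotents with $\sum_{j=1}^{n+1}e_j=\1$, and one has $p_j=\sum_{i=1}^{j}e_i$ together with $\widehat p_{j-1}=\1-p_{j-1}=\sum_{i=j}^{n+1}e_i$. I would therefore write every $x\in\Ac$ as $x=\sum_{i,k}x_{ik}$ with $x_{ik}:=e_ixe_k$, and exploit that $e_{i'}\big(\sum_{i,k}x_{ik}\big)e_{k'}=x_{i'k'}$, so that a sum of distinct blocks vanishes if and only if each block vanishes.

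First I would rewrite the right-hand condition. Setting $h:=g-\1$ and using $\widehat p_{j-1}p_j=\widehat p_{j-1}\,\1\,p_j$, the identity $\widehat p_{j-1}gp_j=\widehat p_{j-1}p_j$ is equivalent to $\widehat p_{j-1}hp_j=0$, that is $\sum_{i\ge j,\,k\le j}h_{ik}=0$. By the block orthogonality this means $h_{ik}=0$ for all $i\ge j\ge k$; letting $j$ run through $1,\dots,n+1$ this assembles into $h_{ik}=0$ whenever $i\ge k$. In other words, the right-hand condition says exactly that $h$ is strictly block-upper-triangular.

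Next I would unfold membership $g\in N(\delta)=\Delta(\delta)^\times\cap\Phi_\delta^{-1}(\1)$. The constraint $g\in\Delta(\delta)$, i.e. $\widehat p_jgp_j=0$ for $j=1,\dots,n$, becomes after subtracting $\1$ the statement $\sum_{i>j,\,k\le j}h_{ik}=0$, which ranging over $j$ gives $h_{ik}=0$ for all $i>k$ (the strictly-below-diagonal blocks). The constraint $\Phi_\delta(g)=\1$ is, since $\Phi_\delta(\1)=\1$, the same as $\Phi_\delta(h)=0$, i.e. $e_jhe_j=h_{jj}=0$ for every $j$ (the diagonal blocks). Together these two conditions read $h_{ik}=0$ for all $i\ge k$ — precisely the strict block-upper-triangularity obtained above. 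Finally, such an $h$ is nilpotent ($h^{n+1}=0$, since a nonzero block product $h_{i_0i_1}h_{i_1i_2}\cdots$ forces a strictly increasing chain of indices in $\{1,\dots,n+1\}$), so $g=\1+h$ is automatically invertible with $g^{-1}=\sum_{m=0}^{n}(-h)^m$; this closes the only gap between the raw block condition and the requirement that $g$ lie in the group $\Delta(\delta)^\times$.

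The main point requiring care is the bookkeeping in the two assembly steps — checking that the union over $j$ of the corner conditions $\{h_{ik}=0: i\ge j\ge k\}$ collapses exactly to $\{h_{ik}=0: i\ge k\}$, and similarly for the $\Delta(\delta)$ and $\Phi_\delta$ constraints — together with the clean extraction of the orthogonal idempotent system $(e_j)$ from the chain. By contrast, invertibility, which is usually the delicate part of establishing membership in a group, comes for free here from the nilpotency of strictly upper-triangular elements.
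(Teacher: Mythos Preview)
Your argument is correct and takes a somewhat different, more systematic route than the paper's. The paper works directly with the projectors $\widehat p_{k-1}p_k=p_k-p_{k-1}$: it first multiplies the identity $\Phi_\delta(x)=\1$ on both sides by $\widehat p_{k-1}p_k$ to obtain the equivalence $x\in\Phi_\delta^{-1}(\1)\Leftrightarrow \widehat p_{k-1}p_kxp_k\widehat p_{k-1}=\widehat p_{k-1}p_k$ for all $k$, and then observes that under the hypothesis $x\in\Delta(\delta)$ the left-hand side simplifies to $\widehat p_{k-1}xp_k$. You instead pass to the full Peirce block decomposition with respect to the system $e_j=p_j-p_{j-1}$ and identify both sides of the asserted equivalence with the single condition that $h=g-\1$ be strictly block-upper-triangular. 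Your approach makes the converse direction---that the right-hand condition forces $g\in\Delta(\delta)$---completely transparent, whereas in the paper it is left implicit; and you explicitly secure invertibility of $g$ via the nilpotency of a strictly upper-triangular $h$, a point the paper's proof passes over in silence. One small quibble: the idempotency of $\Phi_\delta$ is a \emph{consequence} of the mutual orthogonality of the $e_j$, not a proof of it; the orthogonality comes directly from the chain relations on the $p_j$ (the same relations the paper invokes when it writes $p_j-p_{j-1}=\widehat p_{j-1}p_j$), so just cite those instead.
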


\begin{proof} 
Clearly, $p_j-p_{j-1}=\widehat p_{j-1}p_j$ for every $j=1,\dots,n,n+1$. For 
$x\in(\Phi_\delta)^{-1}(\1)$ one has 
$\sum_{j=1}^{n+1}(p_j-p_{j-1})x(p_j-p_{j-1})=\1$,
whence, multiplying the two members of the equality by 
$\widehat p_{k-1}p_k$ with fixed $k$ in $\{1,\dots,n,n+1\}$, one eventually obtains
$$ 
x\in(\Phi_\delta)^{-1}(\1)\Longleftrightarrow 
\widehat p_{k-1}p_kxp_k\widehat p_{k-1}=\widehat p_{k-1}p_k\ (k=1,\dots,n, n+1). 
$$

Moreover, if $x\in\Delta(\delta)$ then 
$$
\begin{aligned}
\widehat p_{k-1}p_kxp_k\widehat p_{k-1}=\widehat p_{k-1}xp_k\widehat p_{k-1}
&=\widehat p_{k-1}xp_k-\widehat p_{k-1}xp_{k-1}=\widehat p_{k-1}xp_k
\end{aligned}
$$
and the statement follows. 
\end{proof}

\begin{remark}
\normalfont
Lemma \ref{Ndelta} is equivalent to 
$$
g\in N(\widehat\delta)\Longleftrightarrow 
p_jg\widehat p_{j-1}=p_j\widehat p_{j-1}\ (j=1,\dots,n+1).
$$

It follows from the characterization that the tangent space $TN(\widehat\delta)$ of the Banach-Lie group $N(\widehat\delta)$ is given by
$$
a\in TN(\widehat\delta)\Longleftrightarrow p_ja\widehat p_{j-1}=0\ (j=1,\dots,n+1).
$$

In particular, for $n=1$, $\delta: 0<p<\1$ and 
$N(\widehat p):=N(\widehat\delta)$ we have that $a\in T_\1N(\widehat p)$ if only if $pa=0$ and $a\widehat p=0$; that is, $T_\1N(\widehat p)=\widehat p\Ac p$, the space on which the Grassmannian $\Gr(p,\Ac)=\Fl_{\Ac}([p])$ is modeled.
\end{remark}

\begin{lemma}\label{tangentN}  
One has the direct sum decomposition 
$$
\Phi_{\delta}^{-1}(0)=TN(\widehat\delta)\dot + TN(\delta)
$$
with
$$
TN(\widehat\delta)=\left(\Phi_{\delta}^{-1}(0)\cap\Delta(\widehat\delta)\right)
 \hbox{ and } TN(\delta)=\left(\Phi_{\delta}^{-1}(0)\cap\Delta(\delta)\right). 
$$
In fact, for every $x\in\Phi_{\delta}^{-1}(0)$, one has $x=y+z$ where 
$$
y=\sum_{j=2}^{n+1}\widehat p_{j-1}p_jxp_{j-1}\in TN(\widehat\delta) 
\hbox{ and }
z=\sum_{j=2}^{n+1} p_{j-1}xp_j\widehat p_{j-1}\in TN(\delta).
$$
\end{lemma}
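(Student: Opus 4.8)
The plan is to reduce the statement to the Peirce (``matrix'') decomposition of $\Ac$ determined by the flag. First I would introduce the differences $e_j:=p_j-p_{j-1}=\widehat p_{j-1}p_j=p_j\widehat p_{j-1}$ for $j=1,\dots,n+1$. Since $\sum_{j=1}^{n+1}e_j=p_{n+1}-p_0=\1$ and (by the relation $\widehat p_{j-1}p_j=p_j-p_{j-1}$ used already in the proof of Lemma~\ref{Ndelta}) the $e_j$ form the complete orthogonal family of idempotents underlying the diagonal truncation $\Phi_\delta$, every $x\in\Ac$ decomposes uniquely as $x=\sum_{i,j=1}^{n+1}e_i x e_j$, and in these coordinates $\Phi_\delta(x)=\sum_{j=1}^{n+1}e_j x e_j$ is exactly the diagonal part. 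Hence $x\in\Phi_\delta^{-1}(0)$ if and only if $e_j x e_j=0$ for every $j$, i.e.\ $x$ is \emph{off-diagonal}.

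Next I would translate the two tangent spaces into triangular conditions. Writing $p_j=\sum_{l\le j}e_l$ and $\widehat p_{j-1}=\sum_{k\ge j}e_k$, the characterization $a\in TN(\widehat\delta)\iff p_j a\widehat p_{j-1}=0$ $(j=1,\dots,n+1)$ recorded in the Remark after Lemma~\ref{Ndelta} becomes $e_k a e_l=0$ whenever $k\le l$; that is, $TN(\widehat\delta)$ is the strictly lower-triangular part $\bigoplus_{k>l}e_k\Ac e_l$. Dually, differentiating the description of $N(\delta)$ in Lemma~\ref{Ndelta} at $\1$ gives $a\in TN(\delta)\iff \widehat p_{j-1}a p_j=0$ $(j=1,\dots,n+1)$, which reads $e_k a e_l=0$ whenever $k\ge l$, so $TN(\delta)$ is the strictly upper-triangular part $\bigoplus_{k<l}e_k\Ac e_l$. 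Comparing with $\Delta(\delta)=\{a:e_k a e_l=0\ (k>l)\}$ and the analogous $\Delta(\widehat\delta)=\{a:e_k a e_l=0\ (k<l)\}$ then yields the two intersection identities $TN(\widehat\delta)=\Phi_\delta^{-1}(0)\cap\Delta(\widehat\delta)$ and $TN(\delta)=\Phi_\delta^{-1}(0)\cap\Delta(\delta)$.

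With these identifications the decomposition is immediate. The strictly lower- and strictly upper-triangular parts meet only in $0$ and their sum is precisely the off-diagonal part, so any $x\in\Phi_\delta^{-1}(0)$ splits uniquely as $x=y+z$ with $y$ its strictly lower and $z$ its strictly upper part; this is the direct sum $\Phi_\delta^{-1}(0)=TN(\widehat\delta)\dot+ TN(\delta)$. It then remains to recognize the stated closed formulas as these two block projections: using $e_j=\widehat p_{j-1}p_j=p_j\widehat p_{j-1}$ together with $p_{j-1}=\sum_{l<j}e_l$ one computes $\sum_{j=2}^{n+1}\widehat p_{j-1}p_j x p_{j-1}=\sum_{k>l}e_k x e_l=y$ and $\sum_{j=2}^{n+1}p_{j-1}x p_j\widehat p_{j-1}=\sum_{k<l}e_k x e_l=z$, as claimed.

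I expect no deep obstacle here: once the block decomposition is in place the splitting is formal. The step requiring the most care is the translation in the second paragraph, namely turning the defining relations of $N(\widehat\delta)$ and $N(\delta)$ into the strictly lower/upper block descriptions and checking the index ranges $j=2,\dots,n+1$ in the formulas for $y$ and $z$; this in turn rests on the fact that the $e_j$ are genuine orthogonal idempotents, which is exactly what makes $\Phi_\delta$ the idempotent truncation with range $D(\delta)$ established earlier.
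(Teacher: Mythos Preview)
Your proof is correct and rests on the same underlying idea as the paper's---the Peirce block decomposition of $\Ac$ relative to the system $e_j=p_j-p_{j-1}$---but your packaging is more direct. The paper never names the $e_j$ explicitly nor writes the full double sum $\sum_{i,j}e_ixe_j$; instead it derives the formula $x=y+z$ by a telescoping recursion: from $\widehat p_{j-1}(p_jxp_j)\widehat p_{j-1}=0$ it obtains $p_jxp_j=p_{j-1}xp_{j-1}+p_{j-1}xp_j\widehat p_{j-1}+\widehat p_{j-1}p_jxp_{j-1}$ and iterates from $j=n+1$ down to $j=1$, after which it verifies $z\in\Delta(\delta)$ (and $y\in\Delta(\widehat\delta)$) by a separate direct computation of $\widehat p_{k-1}zp_k$. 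You get all of this at once by identifying $\Phi_\delta^{-1}(0)$, $TN(\widehat\delta)$, $TN(\delta)$ with the off-diagonal, strictly lower-triangular, and strictly upper-triangular parts respectively, so that the direct sum and the closed formulas for $y,z$ are read off immediately. The trade-off: your argument is shorter and conceptually cleaner, while the paper's recursion avoids appealing to the full matrix picture and lets the explicit sums for $y$ and $z$ emerge mechanically.
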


\begin{proof}
Notice that $x\in\Phi_{\delta}^{-1}(0)$ if and only if 
$$
\widehat p_{j-1}(p_jxp_j)\widehat p_{j-1}=0,\ \forall j=1,\dots,n,n+1
$$
if and only if 
$$
\begin{aligned}
p_jxp_j&=p_{j-1}(p_jxp_j)p_{j-1}+p_{j-1}(p_jxp_j)\widehat p_{j-1}\\
&=
p_{j-1}xp_{j-1}+p_{j-1}xp_j\widehat p_{j-1}+\widehat p_{j-1}p_jxp_{j-1}.
\end{aligned}
$$

Therefore,
$$
\begin{aligned}
x&=p_{n+1}xp_{n+1}=p_nxp_n+p_nxp_{n+1}\widehat p_n+\widehat p_np_{n+1}xp_n\\
&=p_{n-1}xp_{n-1}+\sum_{j=n}^{n+1}p_{j-1}xp_j\widehat p_{j-1}
+\sum_{j=n}^{n+1}\widehat p_{j-1}p_jxp_{j-1}\\
&=\dots=\\
&=p_1xp_1+\sum_{j=2}^{n+1}p_{j-1}xp_j\widehat p_{j-1}
+\sum_{j=2}^{n+1}\widehat p_{j-1}p_jxp_{j-1}\\
&=\sum_{j=2}^{n+1}p_{j-1}xp_j\widehat p_{j-1}
+\sum_{j=2}^{n+1}\widehat p_{j-1}p_jxp_{j-1}
\end{aligned}
$$

Also, $x\in\Phi_{\delta}^{-1}(0)\cap\Delta(\delta)$ if and only if 
$p_jxp_j=p_{j-1}xp_{j-1}+p_{j-1}xp_j\widehat p_{j-1}+\widehat p_{j-1}p_jxp_{j-1}$ 
and 
$\widehat p_{j-1}xp_j=0$ for every $j=1,\dots,n,n+1$. This is the same as 
$xp_j=
xp_{j-1}+p_{j-1}xp_j-xp_{j-1}+xp_{j-1}-xp_{j-1}=p_{j-1}xp_j$; that is, 
$\widehat p_{j-1}xp_j=0$ for all $j=1,\dots,n,n+1$. Equivalently, $x\in TN(\delta)$.

Similarly, $x\in\Phi_{\delta}^{-1}(0)\cap\Delta(\widehat\delta)$ if and only if $p_jx\widehat p_{j-1}=0$ ($j=1,\dots,n,n+1$), 
if and only if 
$x\in TN(\widehat\delta)$.

Fix $k\in\{1,\dots,n\}$. Then, 
$$
\begin{aligned}
\widehat p_{k-1}zp_k
&=\widehat p_{k-1}\left(\sum_{j=2}^{n+1} p_{j-1}xp_j\widehat p_{j-1}\right)p_k
=\sum_{j=2}^{n+1}\widehat p_{k-1}p_{j-1}xp_jp_k\widehat p_{j-1}\\
&=\sum_{j=2}^{k} (\widehat p_{k-1}p_{j-1})xp_j\widehat p_{j-1}
+\sum_{j=k+1}^{n+1}\widehat p_{k-1}p_{j-1}x(p_k\widehat p_{j-1})=0+0=0, 
\end{aligned}
$$
whence $z\in\Delta(\delta)$. Analogously, $p_ky\widehat p_{k-1}=0$ and therefore $y\in\Delta(\widehat\delta)$.

Finally, it is clear that $x\in\left(\Phi_{\delta}^{-1}(0)\cap\Delta(\widehat\delta)\right)
\bigcap\left(\Phi_{\delta}^{-1}(0)\cap\Delta(\delta)\right)$ if and only if $x=0$.
\end{proof}

\begin{remark}\label{connectionE}
\normalfont
Lemma \ref{tangentN} contributes to understand the decomposition \eqref{decomposition} 
in relation with that one defined by $\Phi_\delta$,
$$
\begin{aligned}
\Ac&
=TN(\widehat\delta)\dot +\Delta(\delta)
=TN(\widehat\delta)\dot + TN(\delta)\dot+D(\delta)\\
&
=\left(\Phi_{\delta}^{-1}(0)\cap\Delta(\widehat\delta)\right)\dot
+\left((\Phi_{\delta}^{-1}(0)\cap\Delta(\delta))\dot+D(\delta)\right)
=\Ker\Phi_\delta\dot+\Ran\Phi_\delta,
\end{aligned}
$$
and allows us to find the associated projection 
$E_\delta\colon\Ac\to\Delta(\delta)$ defined by
$$
E_\delta(a):=\sum_{j=1}^{n+1}\widehat p_{j-1}p_j a \widehat p_{j-1}, 
\quad a\in\Ac.  
$$
As a matter of fact, $E_\delta$ is a projection of $\Ac$ onto 
$\Delta(\delta)$, with $\Ker E_\delta= TN(\widehat\delta)$, such that 
$E_\delta(\1)=\1$. 
The projection $E_\delta$ and its image $\Delta(\delta)$ hold a central role in endowing the flag manifold at $[\delta]$ 
with the structure of a complex manifold on which the unitary group of $\Ac$ acts by holomorphic maps; 
this can be done just as in the case of Grassmann manifolds (see for instance \cite[Sect. 3]{BG09}). 

Put $\Ac^\times_{\delta}:=\{g\in\Ac^\times:gp_j=p_j\ (j=1,\dots,n)\}
=\{g\in\Ac^\times:gp_n=p_n\}$ and 
$\Gc([\delta]):=\Delta(\delta)^\times/\Ac^\times_{\delta}$, where the quotient is understood in the sense of the multiplication in $\Ac$. 
\end{remark}

\begin{lemma}\label{normalG} 
The following assertions hold true: 
\begin{enumerate}[(i)]
\item\label{normalG_item1}
$\Ac^\times_{\delta}$ is a normal subgroup of $\Delta(\delta)^\times$.
\item\label{normalG_item2}
The multiplication in $\Ac^\times$ induces a natural free right action of the group $\Gc([\delta])$ on the homogeneous space 
$\Ac^\times/\Ac^\times_{\delta}$, and this action is holomorphic.  
\end{enumerate}
\end{lemma}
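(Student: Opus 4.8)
The plan is to treat part~(i) as the computational core and to derive part~(ii) almost formally from it, the only genuinely analytic point being holomorphicity. Throughout I would use the description $\Ac^\times_{\delta}=\{g\in\Ac^\times:gp_n=p_n\}$ from Remark~\ref{connectionE}, together with the fact that $h\in\Delta(\delta)^\times$ forces both $h$ and $h^{-1}$ into $\Delta(\delta)$, so that in particular $\widehat p_nhp_n=\widehat p_nh^{-1}p_n=0$.

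For~(i) I would first record the inclusion $\Ac^\times_{\delta}\subseteq\Delta(\delta)^\times$: if $gp_n=p_n$ then $gp_j=gp_np_j=p_np_j=p_j$ for every $j\le n$, hence $\widehat p_jgp_j=\widehat p_jp_j=0$ and $g\in\Delta(\delta)$; since also $g^{-1}p_n=p_n$ one gets $g^{-1}\in\Ac^\times_{\delta}\subseteq\Delta(\delta)$, so $g\in\Delta(\delta)^\times$. That $\Ac^\times_{\delta}$ is a subgroup is immediate from its description. For normality I fix $h\in\Delta(\delta)^\times$ and $g\in\Ac^\times_{\delta}$ and aim at $hgh^{-1}p_n=p_n$. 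The two facts that do the work are, first, that $g$ fixes the left ideal $p_n\Ac$ pointwise, since $g(p_na)=(gp_n)a=p_na$, and, second, that $h^{-1}\in\Delta(\delta)$ gives $h^{-1}p_n=p_nh^{-1}p_n\in p_n\Ac$. Combining them yields $g(h^{-1}p_n)=h^{-1}p_n$, whence $hgh^{-1}p_n=h(gh^{-1}p_n)=hh^{-1}p_n=p_n$, so $hgh^{-1}\in\Ac^\times_{\delta}$.

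Granting normality, part~(ii) becomes mostly bookkeeping. I would define the right action of $\Delta(\delta)^\times$ on $\Ac^\times/\Ac^\times_{\delta}$ induced by multiplication, $(g\Ac^\times_{\delta})\cdot h:=gh\Ac^\times_{\delta}$; its well-definedness in the variable $g$ is exactly the condition $h^{-1}kh\in\Ac^\times_{\delta}$ for $k\in\Ac^\times_{\delta}$, which is normality. As $\Ac^\times_{\delta}$ acts trivially, the action descends to $\Gc([\delta])=\Delta(\delta)^\times/\Ac^\times_{\delta}$, and it is free because $gh\Ac^\times_{\delta}=g\Ac^\times_{\delta}$ forces $h\in\Ac^\times_{\delta}$, i.e.\ $h\Ac^\times_{\delta}$ is the unit of $\Gc([\delta])$. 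For holomorphicity I would argue by descent through submersions: the multiplication $\Ac^\times\times\Delta(\delta)^\times\to\Ac^\times$ is holomorphic, and composing with the quotient projection produces a holomorphic map into $\Ac^\times/\Ac^\times_{\delta}$ that is constant on the fibres of $\Ac^\times\times\Delta(\delta)^\times\to(\Ac^\times/\Ac^\times_{\delta})\times\Gc([\delta])$. Since $\Ac^\times_{\delta}$ is a Banach--Lie subgroup of $\Ac^\times$, with Lie algebra the complemented subspace $\Ac\widehat p_n=\{a\in\Ac:ap_n=0\}$, both projections are holomorphic submersions, hence so is their product, and the induced action map is holomorphic by the descent criterion already invoked for $\gamma$ in Subsection~\ref{subs2.4} (cf.\ Corollary~4.8 in \cite{Up85}).

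The main obstacle is the normality computation: everything turns on noticing that $\Ac^\times_{\delta}$ fixes the left ideal $p_n\Ac$ pointwise while the hypothesis $h^{-1}\in\Delta(\delta)$ drives $h^{-1}p_n$ back into $p_n\Ac$. Once this is secured, the well-definedness of the action is a direct consequence of normality, and the remaining points -- the action axioms, freeness, and holomorphic descent -- are routine.
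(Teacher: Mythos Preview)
Your proof is correct and follows essentially the same route as the paper. The paper dismisses~(i) as ``straightforward'' and for~(ii) verifies well-definedness of $(g\Ac^\times_\delta,a\Ac^\times_\delta)\mapsto ga\Ac^\times_\delta$ in both variables at once via the direct computation $b^{-1}h^{-1}gap_j=b^{-1}p_jap_j=b^{-1}ap_j=p_j$, whereas you unpack~(i) explicitly and then invoke normality to get well-definedness in the $g$-variable before passing to the quotient in the $a$-variable; the freeness and holomorphicity arguments match (the paper just says ``usual arguments'' for the latter).
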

 
\begin{proof}
\eqref{normalG_item1} This is straightforward.

\eqref{normalG_item2} The action referred to in the statement is 
$$
\alpha\colon\left(\Ac^\times/\Ac^\times_{\delta}\right)\times\Gc([\delta])
\to\Ac^\times/\Ac^\times_{\delta},
\quad (g\Ac^\times_{\delta},a\Ac^\times_{\delta})\mapsto
(ga)\Ac^\times_{\delta}. 
$$  
Let $g,h\in\Ac^\times$ and $a,b\in\Delta(\delta)^\times$ such that 
$g\Ac^\times_{\delta}=h\Ac^\times_{\delta}$ and 
$a\Ac^\times_{\delta}=b\Ac^\times_{\delta}$. Then, for $j=1,\dots,n$, 
$$
b^{-1}h^{-1}gap_j=b^{-1}(h^{-1}g)ap_j=b^{-1}p_jap_j=b^{-1}ap_j=p_j,
$$
whence 
$ga\Ac^\times_{\delta}=hb\Ac^\times_{\delta}$. Thus the action $\alpha$ is well defined. That $\alpha$ is free is even simpler to show, and the holomorphy of $\alpha$ follows by usual arguments.
\end{proof}

\begin{remark}
\normalfont
The mapping $g\Ac^\times_{\delta}\mapsto (p_jgp_j)_{j=1}^n=(gp_j)_{j=1}^n$ is a biholomorphic group homomorphism between the Banach-Lie groups 
$\Gc([\delta])$ and 
$\{(h_1,\dots,h_n):h_j\in(p_j\Ac p_j)^\times, h_j=h_np_j\ (j=1,\dots,n))\}$. When $n=1$, one gets that the structure group $(p\Ac p)^\times$ of the Stiefel bundle $\sigma$ is isomorphic to the quotient group
$\Gc([p])=\Delta(p)^\times/\Ac^\times_{p}$.
\end{remark}

\begin{proposition}\label{commonFl} 
The  
homogeneous spaces  
$$
\Fl_\Ac([\delta]) 
\text{ and }
\left(\Ac^\times/\Ac^\times_\delta\right)/\Gc([\delta])
$$ 
are biholomorphically diffeomorphic.
\end{proposition}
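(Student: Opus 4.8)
The plan is to recognize the asserted biholomorphism as the Banach--Lie incarnation of the classical third isomorphism theorem for homogeneous spaces, applied to the chain of subgroups $\Ac^\times_{\delta}\subseteq\Delta(\delta)^\times\subseteq\Ac^\times$. By Lemma~\ref{normalG}\eqref{normalG_item1} the group $\Ac^\times_{\delta}$ is normal in $\Delta(\delta)^\times$, so that $\Gc([\delta])=\Delta(\delta)^\times/\Ac^\times_{\delta}$ acts freely and holomorphically on the right of $\Ac^\times/\Ac^\times_{\delta}$ by Lemma~\ref{normalG}\eqref{normalG_item2}. Since we already know the $\Ac^\times$-equivariant biholomorphism $\Fl_\Ac([\delta])\simeq\Ac^\times/\Delta(\delta)^\times$, it suffices to produce a biholomorphism
$$
\Theta\colon(\Ac^\times/\Ac^\times_{\delta})/\Gc([\delta])\to\Ac^\times/\Delta(\delta)^\times .
$$

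First I would define $\Theta$ at the set-theoretic level. Writing $q_1\colon\Ac^\times\to\Ac^\times/\Ac^\times_{\delta}$ and $q_2\colon\Ac^\times/\Ac^\times_{\delta}\to(\Ac^\times/\Ac^\times_{\delta})/\Gc([\delta])$ for the two quotient maps, the $\Gc([\delta])$-orbit of $g\Ac^\times_{\delta}$ is $\{ga\Ac^\times_{\delta}:a\in\Delta(\delta)^\times\}$, whose union inside $\Ac^\times$ is exactly the coset $g\Delta(\delta)^\times$, because $\Ac^\times_{\delta}\subseteq\Delta(\delta)^\times$. Hence the natural candidate is the rule $q_2(q_1(g))\mapsto g\Delta(\delta)^\times$. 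I would check it is well defined (if $g'\Ac^\times_{\delta}=ga\Ac^\times_{\delta}$ with $a\in\Delta(\delta)^\times$, then $g'\Delta(\delta)^\times=g\Delta(\delta)^\times$), surjective (every coset $g\Delta(\delta)^\times$ is attained), and injective (if $g\Delta(\delta)^\times=h\Delta(\delta)^\times$ then $h=ga$ for some $a\in\Delta(\delta)^\times$, so $h\Ac^\times_{\delta}$ lies in the $\Gc([\delta])$-orbit of $g\Ac^\times_{\delta}$). This yields a bijection $\Theta$ satisfying $\Theta\circ(q_2\circ q_1)=p_\delta$, where $p_\delta\colon\Ac^\times\to\Ac^\times/\Delta(\delta)^\times$ is the defining projection of the flag manifold.

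The only substantive step is to upgrade $\Theta$ to a biholomorphism, and here I would invoke that each of the three projections is a holomorphic principal bundle: $q_1$ and $p_\delta$ because quotients of the Banach--Lie group $\Ac^\times$ by the Banach--Lie subgroups $\Ac^\times_{\delta}$ and $\Delta(\delta)^\times$ admit local holomorphic sections, and $q_2$ because the $\Gc([\delta])$-action is free and holomorphic. Since $p_\delta=\Theta\circ(q_2\circ q_1)$ is holomorphic while $q_2\circ q_1$ is a holomorphic submersion admitting local holomorphic sections, composing $p_\delta$ with such a local section shows that $\Theta$ is holomorphic. For the inverse, I would take local holomorphic sections of $p_\delta$ and post-compose them with $q_2\circ q_1$, obtaining local holomorphic inverses of $\Theta$; thus $\Theta^{-1}$ is holomorphic as well, and the $\Ac^\times$-equivariance is immediate from the very definition of $\Theta$.

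The main obstacle lies entirely in this last step: in infinite dimensions one must genuinely verify that the double quotient $(\Ac^\times/\Ac^\times_{\delta})/\Gc([\delta])$ carries a complex Banach-manifold structure and that $q_2\circ q_1$ is a holomorphic submersion with local sections, none of which is automatic. These facts rest on the complementability of the relevant Lie subalgebras --- for $\Ac^\times/\Delta(\delta)^\times$ through the decomposition~\eqref{directsum} and the idempotent $E_\delta$, and for $\Ac^\times/\Ac^\times_{\delta}$ through the fact that $\Ac^\times_{\delta}$ is a Banach--Lie subgroup whose Lie algebra $\Ac\widehat p_n$ is complemented by $\Ac p_n$. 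Once the local-section machinery is available, the universal property of these quotients promotes the set-theoretic bijection $\Theta$ to the desired biholomorphism.
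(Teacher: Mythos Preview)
Your proposal is correct and follows essentially the same approach as the paper: both recognize the result as an instance of the third isomorphism theorem for the chain $\Ac^\times_\delta\subseteq\Delta(\delta)^\times\subseteq\Ac^\times$, defining the natural map from the double quotient to $\Ac^\times/\Delta(\delta)^\times$ and checking it is a well-defined bijection. The paper's proof is in fact just a two-line sketch that concludes with ``the remainder of the proof is standard,'' so your careful treatment of the biholomorphism via local holomorphic sections of the principal bundles, and your explicit acknowledgment of the infinite-dimensional complementability issues (using \eqref{directsum} and the splitting $\Ac=\Ac p_n\dot+\Ac\widehat p_n$), spell out precisely what the paper leaves implicit.
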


\begin{proof}
Since $\Ac^\times_\delta\subset\Delta(\delta)^\times$ the quotient map 
$\Ac^\times/\Ac^\times_\delta\to\Ac^\times/\Delta(\delta)^\times$ is well defined and surjective. Moreover, $g\Ac^\times\mapsto\Delta(\delta)^\times$ if and only if 
$g\in\Delta(\delta)^\times$. The remainder of the proof is standard. 
\end{proof}

As said before, the multiplication in $\Ac$ splits the open set $\Omega_\delta$ as
$$   
\Omega_\delta=N(\widehat\delta)\Delta(\delta)^\times.
$$
A similar factorization holds for the open set $\Omega_\delta\Ac^\times_\delta$ of 
the homogeneous space $\Ac^\times/\Ac^\times_\delta$ under the natural left action of the group $N(\widehat\delta)$ on the group 
$\Gc([\delta])$.

\begin{proposition}\label{factor}
Set $\widetilde\Omega_\delta:=\{g\Ac^\times_\delta:g\in\Omega_\delta\}$. Then
$$
\widetilde\Omega_\delta
=N(\widehat\delta)\cdot \Gc([\delta]).
$$
and this factorization is unique for every $g\Ac^\times_\delta$ in 
$\widetilde\Omega_\delta$.
\end{proposition}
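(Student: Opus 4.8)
The plan is to obtain everything by projecting the factorization \eqref{decomposition} down to the homogeneous space $\Ac^\times/\Ac^\times_\delta$. Write $\pi\colon\Ac^\times\to\Ac^\times/\Ac^\times_\delta$, $g\mapsto g\Ac^\times_\delta$, for the quotient map; by definition $\widetilde\Omega_\delta=\pi(\Omega_\delta)$, the right action of $\Gc([\delta])=\Delta(\delta)^\times/\Ac^\times_\delta$ is the one from Lemma~\ref{normalG}, and $N(\widehat\delta)$ acts on $\Ac^\times/\Ac^\times_\delta$ on the left by left multiplication (which is automatically well defined on right cosets). With these conventions, an element of $N(\widehat\delta)\cdot\Gc([\delta])$ is precisely one of the form $(na)\Ac^\times_\delta$ with $n\in N(\widehat\delta)$ and $a\in\Delta(\delta)^\times$, the two actions commuting.

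First I would settle the set equality, which is essentially a restatement of \eqref{decomposition}. Indeed, $g\Ac^\times_\delta\in\widetilde\Omega_\delta$ if and only if $g\in\Omega_\delta$, and by \eqref{decomposition} this holds if and only if $g=na$ for some $n\in N(\widehat\delta)$ and $a\in\Delta(\delta)^\times$; applying $\pi$ gives $g\Ac^\times_\delta=n\cdot(a\Ac^\times_\delta)\in N(\widehat\delta)\cdot\Gc([\delta])$, and conversely every such element lies in $\pi(\Omega_\delta)=\widetilde\Omega_\delta$. Hence $\widetilde\Omega_\delta=N(\widehat\delta)\cdot\Gc([\delta])$.

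The substance of the statement is the uniqueness of the factorization, and the key observation that makes it work is the inclusion $\Ac^\times_\delta\subseteq\Delta(\delta)^\times$: if $gp_j=p_j$ then $\widehat p_jgp_j=\widehat p_jp_j=0$ for $j=1,\dots,n$, so $g\in\Delta(\delta)$. Now suppose $n\cdot(a\Ac^\times_\delta)=n'\cdot(a'\Ac^\times_\delta)$ with $n,n'\in N(\widehat\delta)$ and $a,a'\in\Delta(\delta)^\times$; that is, $na\Ac^\times_\delta=n'a'\Ac^\times_\delta$, so that $na=n'a'h$ for some $h\in\Ac^\times_\delta$. Since $h\in\Ac^\times_\delta\subseteq\Delta(\delta)^\times$, we have $a'h\in\Delta(\delta)^\times$, and thus $na=n'(a'h)$ displays the same element of $\Omega_\delta$ written in two ways according to $\Omega_\delta=N(\widehat\delta)\Delta(\delta)^\times$. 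The uniqueness asserted for \eqref{decomposition} then forces $n=n'$ and $a=a'h$, whence $a\Ac^\times_\delta=a'h\Ac^\times_\delta=a'\Ac^\times_\delta$, so the pair $(n,a\Ac^\times_\delta)\in N(\widehat\delta)\times\Gc([\delta])$ is determined by $g\Ac^\times_\delta$. The only delicate point is precisely this inclusion $\Ac^\times_\delta\subseteq\Delta(\delta)^\times$, which lets the coset ambiguity $h$ be absorbed into the $\Delta(\delta)^\times$-factor so that the uniqueness of \eqref{decomposition} applies; the rest is a direct transcription to the quotient.
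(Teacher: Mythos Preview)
Your proof is correct and follows essentially the same approach as the paper: both derive the set equality from the factorization $\Omega_\delta=N(\widehat\delta)\Delta(\delta)^\times$ and obtain uniqueness by absorbing the $\Ac^\times_\delta$-ambiguity into the $\Delta(\delta)^\times$-factor so that the uniqueness in \eqref{decomposition} applies. Your version is in fact more explicit than the paper's about the key inclusion $\Ac^\times_\delta\subseteq\Delta(\delta)^\times$, which is precisely what makes that absorption possible.
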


\begin{proof}
It is readily seen that the action
$$
N(\widehat\delta)\times\Gc([\delta])
\to\widetilde\Omega_\delta,\quad (a,g\Ac^\times_\delta)\mapsto (ag)\Ac^\times_\delta
$$
is well defined and surjective. Moreover, if $a,b\in N(\widehat\delta)$ and $g,h\in\Delta(\delta)^\times$ are such that 
$a\cdot g\Ac^\times_\delta=b\cdot h\Ac^\times_\delta$ then  
$b^{-1}agp_n=hp_n$, with $gp_n, hp_n\in\Delta(\delta)^\times$. By the uniqueness of decompositions in 
$\Omega_\delta=N(\widehat\delta)\Delta(\delta)^\times$ one has $b^{-1}a=\1$ and therefore $a=b$ and $g\Ac^\times_\delta=h\Ac^\times_\delta$. Thus the factorization is unique. 
\end{proof}

\subsection{Stiefel manifold}

In Stiefel bundles on Grassmannians, the action of $\Ac^\times$ on the total space $\Vc(p,\Ac)$ is transitive, 
so that $\Vc(p,\Ac)=\Ac^\times p$. 
Then the bundle map $\sigma\colon\Vc(p,\Ac)\to\Gr(p,\Ac)$ gets the simple form $gp\mapsto[gpg^{-1}]$. 
Similarly, for arbitrary $n$ and $\delta$, 
one looks for a Stiefel bundle with total space  $\Ac^\times\cdot(p_1,\dots,p_n)$ and bundle map 
$(gp_1,\dots,gp_n)\mapsto[(gp_1g^{-1},\dots,gp_ng^{-1})]$.

For $\delta=(p_1,\dots,p_n)\in\Pc_n(\Ac)$, let $\Vc(\delta,\Ac)$ denote the set of 
$n$-uples given by
$$
\begin{aligned}
&\{(v_1,\dots,v_n)\in\Ac^n:v_j\in\Vc(p_j,\Ac)\  
(1\le j\le n), v_j=v_{j+1}p_j\ (1\le j\le n-1)\}\\
& =\{(vp_1,\dots,vp_{n-1},v)\in\Ac^n:v\in\Vc(p_n,\Ac)\}\\
& =\{(gp_1,\dots,gp_n)\in\Ac^n:g\in\Ac^\times\}=:\Vc(\delta,\Ac).
\end{aligned} 
$$
There is a natural bijection from $\Vc(\delta,\Ac)$ onto $\Vc(p_n,\Ac)$. 
We thereby identify both these sets, their topologies and differential structures. 
There exist $\Ac^\times$-equivariant bijections  
$\Vc(\delta,\Ac)\simeq \Ac^\times/\Ac^\times_\delta
\simeq\Ac^\times/\Ac^\times_{p_n}$.

\begin{definition}\label{Nstiefel}
\normalfont
We call \emph{Stiefel bundle on the flag manifold} $\Fl([\delta])$ the map
$$
\sigma_\delta\colon\Vc(\delta,\Ac)\to\Fl_\Ac([\delta]), 
\quad gp_n\mapsto[(gp_1g^{-1},\dots,gp_ng^{-1})] 
$$
\end{definition}

Since $p_np_j=p_j$ for every $j=1,\dots,n$, it is easily checked that the above map~$\sigma_\delta$ is well defined. 

\begin{proposition}\label{Stiefel}
The Stiefel bundle $\sigma_\delta\colon\Vc(\delta,\Ac)\to\Fl([\delta])$ is a principal bundle with structure group $\Gc([\delta])$. 
\end{proposition}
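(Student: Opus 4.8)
The plan is to reduce the whole statement to the fact that a quotient of a group by the subgroup $\Ac^\times_\delta$, followed by the further quotient by $\Gc([\delta])$, is a principal bundle, and then to verify the three conditions of Definition~\ref{pb1} using the factorizations already established. First I would record the $\Ac^\times$-equivariant identification $\Vc(\delta,\Ac)\simeq\Ac^\times/\Ac^\times_\delta$ given by $gp_n\mapsto g\Ac^\times_\delta$; this is well defined and bijective because $gp_n=hp_n$ holds iff $g^{-1}h\in\Ac^\times_{p_n}=\Ac^\times_\delta$. Transporting the right action of $\Gc([\delta])$ from Lemma~\ref{normalG}\eqref{normalG_item2} through this identification yields the action $gp_n\cdot(a\Ac^\times_\delta)=(ga)p_n$ for $a\in\Delta(\delta)^\times$. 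Under the same identification $\sigma_\delta$ becomes $g\Ac^\times_\delta\mapsto g\cdot[\delta]$, which is precisely the map appearing in Proposition~\ref{commonFl}; hence $\sigma_\delta$ is the canonical projection $\Ac^\times/\Ac^\times_\delta\to(\Ac^\times/\Ac^\times_\delta)/\Gc([\delta])\simeq\Fl_\Ac([\delta])$.

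Granting this description, conditions~\eqref{pb1_item1} and~\eqref{pb1_item2} are immediate. Freeness of the $\Gc([\delta])$-action is exactly Lemma~\ref{normalG}\eqref{normalG_item2}, and the orbits coincide with the fibers of $\sigma_\delta$ because $\Fl_\Ac([\delta])$ has been identified with the orbit space $(\Ac^\times/\Ac^\times_\delta)/\Gc([\delta])$ in Proposition~\ref{commonFl}. At this point I would also record the two commuting actions that drive the last step: the left $\Ac^\times$-action $g_0\cdot(g\Ac^\times_\delta)=(g_0g)\Ac^\times_\delta$ and the right $\Gc([\delta])$-action commute, and $\sigma_\delta$ is equivariant for the transitive left $\Ac^\times$-action on both total space and base.

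The substance of the proof is local triviality, condition~\eqref{pb1_item3}, and here Proposition~\ref{factor} does the work. I would set $V:=N(\widehat\delta)\cdot[\delta]$, which is an open neighborhood of $[\delta]$ biholomorphic to $N(\widehat\delta)$ because $\Fl_\Ac([\delta])$ is modeled on $TN(\widehat\delta)$ via the decomposition~\eqref{decomposition}. A short computation using $\Omega_\delta=N(\widehat\delta)\Delta(\delta)^\times$ shows $\sigma_\delta^{-1}(V)=\widetilde\Omega_\delta$: indeed $g'\cdot[\delta]\in V$ forces $g'\in N(\widehat\delta)\Delta(\delta)^\times=\Omega_\delta$ since the stabilizer of $[\delta]$ is $\Delta(\delta)^\times$. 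Proposition~\ref{factor} then supplies the unique factorization $\widetilde\Omega_\delta=N(\widehat\delta)\cdot\Gc([\delta])$, i.e.\ a bijection $N(\widehat\delta)\times\Gc([\delta])\to\sigma_\delta^{-1}(V)$, whose inverse combined with $N(\widehat\delta)\simeq V$ gives the $\Gc([\delta])$-equivariant trivialization $\psi\colon\sigma_\delta^{-1}(V)\to V\times\Gc([\delta])$ making the diagram~\eqref{pb1_eq1} commute. Finally, translating $\psi$ by elements $g_0\in\Ac^\times$ (legitimate because the left $\Ac^\times$- and right $\Gc([\delta])$-actions commute and $\sigma_\delta$ is left-equivariant) produces trivializations over the open sets $g_0\cdot V$, which cover $\Fl_\Ac([\delta])$ by transitivity of the $\Ac^\times$-action.

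The main obstacle I anticipate is not the algebra but the smoothness bookkeeping in this last step: one must check that the set-theoretic factorization of Proposition~\ref{factor} is genuinely a biholomorphism $N(\widehat\delta)\times\Gc([\delta])\to\widetilde\Omega_\delta$, so that $\psi$ and $\psi^{-1}$ are smooth and the diagram is one of manifolds, and that $V$ is open with $N(\widehat\delta)\simeq V$ a chart. Both follow from the smoothness of the decomposition~\eqref{decomposition} and from $\sigma_\delta$ being a holomorphic quotient map, but this is the point that requires the most care.
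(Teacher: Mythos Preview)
Your proposal is correct and follows essentially the same route as the paper's proof: both use Lemma~\ref{normalG} for freeness, Proposition~\ref{factor} for the local trivialization over $V=N(\widehat\delta)\cdot[\delta]$ via the factorization $\widetilde\Omega_\delta=N(\widehat\delta)\cdot\Gc([\delta])$, and then propagate this chart by the left $\Ac^\times$-action to cover the base. The paper handles your anticipated smoothness obstacle by noting that $N(\widehat\delta)\times\Delta(\delta)^\times\to\Omega_\delta$ is already a homeomorphism (\cite[Remark~4.2]{BN10}) and that the quotient projections are open, then defers the remaining details to \cite[Lemma~A.1]{BN10}; your identification of this point as the one requiring care is exactly right.
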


\begin{proof}
To begin with, the right action of $\Gc([\delta])$ on $\Vc(\delta)$ given in 
Lemma \ref{normalG} is free as said there.  

Let $\pi_\delta\colon\Ac^\times\to\Ac^\times/\Delta(\delta)^\times=\Fl_\Ac([\delta])$ 
be the canonical quotient map.  
Define $\psi_1
=\pi_{\delta\mid_{N(\widehat\delta)}}\colon N(\widehat\delta)\to\Fl_\Ac([\delta])
=\Vc(\delta)/\Gc([\delta])$, and then set 
$V_1=\psi_1(N(\widehat\delta))$, $\psi_g=g\cdot\psi_1$, $V_g=g\cdot V_1$ ($g\in\Ac^\times$).

Proposition \ref{factor} implies that the multiplication mapping 
$$
N(\widehat\delta)\times\Gc([\delta])\to\widetilde\Omega_\delta,
\quad (g,a\Ac^\times_\delta)\mapsto (ga)\Ac^\times_\delta
$$
is a homeomorphism, since the map 
$N(\widehat\delta)\times\Delta(\delta)^\times\to\Omega_\delta$,
$(g,a)\mapsto ga$, is a homeomorphism (\cite[Remark 4.2]{BN10}) and the canonical projections  
$N(\widehat\delta)\times\Delta(\delta)^\times
\to N(\widehat\delta)\times\Gc([\delta])$ and $\Omega_\delta\to\widetilde\Omega_\delta$ 
are open.

Then the $\Gc([\delta])$-equivariant trivialization diagram 
\begin{equation*}
\begin{CD}
\widetilde\Omega @>{\simeq}>> N(\widehat\delta)\times\Gc([\delta])
@>{\psi_1\times\1_\Gc}>> V_1\times\Gc([\delta]) \\
@VV{}V @VV{}V @VV{}V \\
N(\widehat\delta) @>{\simeq}>> N(\widehat\delta) @>{\Psi_1}>> V_1
\end{CD}
\end{equation*} 
and arguments involving the family 
$(\psi_g\colon N(\widehat\delta)\to V_g)_{g\in\Ac^\times}$, similar to some reasoning in the proof of \cite[Lemma A.1]{BN10}, shows that $\sigma_\delta$ is a 
$\Gc(\delta)$-principal bundle as required. 
\end{proof}

Now, notice that the tangent space $T\Vc(\delta,\Ac)$ is 
$$
 T\Vc(\delta,\Ac)
 =T\left(\Ac^\times/\Ac^\times_\delta\right)\simeq\Ac/\Ac_\delta
=\Ac/\{ap_n=0\}\simeq\Ac p_n.
$$
and
$$
x\in TN(\widehat\delta)\Leftrightarrow p_jx\widehat p_{j-1}=0\ (j=1,\dots,n+1)
\Rightarrow x\widehat p_n=0
\Leftrightarrow x=xp_n,
$$
whence
$TN(\widehat\delta)=TN(\widehat\delta)p_n$.

Hence, $\Ac p_n=TN(\widehat\delta)\dot +\Delta(\delta)p_n$ and the mapping 
$\widetilde E_\delta:=E_\delta\mid_{\Ac p_n}$ is a smooth projection of 
$\Ac p_n$ onto $\Delta(\delta)p_n$.

\subsection{Connections on Stiefel and frame bundles over flag manifolds}\label{subsect3.3}

In this subsection we construct natural principal connections on Stiefel bundles over flag manifolds.
To this end we work with the unitary reduction of the bundles considered above, 
and we use the full force of the assumption that $\Ac$ is a hermitian $*$-algebra.  
That assumption is encoded in the fact that its unitary group 
$U(\Ac):=\{u\in\Ac\mid uu^*=u^*u=\1\}$ 
acts transitively on the flag manifolds 
$\Fl_\Ac([\delta])\simeq\Fl(\delta):=\{(up_1u^{-1},\dots, up_nu^{-1})\mid u\in U(\Ac)\}$, 
for all $\delta\in\Pc_n(\Ac)$ such that $p_j\in\Pc_\perp(\Ac)$ for $j=1,\dots,n$, by \cite[Th. 4.5]{BN10}. 
Regarding the unitary group $U(\Ac)$ as a Banach manifold, 
its tangent space is isomorphic to the real Lie subalgebra of $\Ac$ 
defined by $\ug(\Ac):=\{a\in\Ac\mid a^*=-a\}$.

Let $\delta=(p_1,\dots,p_n)\in\Pc_n(\Ac)$ with $p_j\in\Pc_\perp(\Ac)$ for $j=1,\dots,n$. 
The corresponding isotropy group at $[\delta]\in\Fl(\delta)$ is 
$U(\Ac)\cap \Delta(\delta)^\times$ and one thus obtains a $U(\Ac)$-equivariant diffeomorphism 
\begin{equation}\label{unitaryBundle}
U(\Ac)/(U(\Ac)\cap \Delta(\delta)^\times)\simeq \Fl(\delta)
\end{equation}
which gives a principal bundle $U(\Ac)\to \Fl(\delta)$ 
whose structural group is the Banach-Lie subgroup $U(\Ac)\cap \Delta(\delta)^\times$ of $U(\Ac)$.  

Using the hypothesis $p_j^*=p_j$ for $j=1,\dots,n$, we obtain 
$$
U(\Ac)\cap \Delta(\delta)^\times=\{u\in U(\Ac)\mid up_j
=p_ju\text{ for }j=1,\dots,n\}=U(\Ac)\cap D(\delta).
$$
Put $D^U(\delta):=U(\Ac)\cap D(\delta)$ and 
$D^{\ug}(\delta):=\ug(\Ac)\cap D(\delta)$. 
Then the diffeomorphism~\eqref{unitaryBundle} is the same as  
$\U(\Ac)/D^U(\delta)\simeq \Fl(\delta)$, whence we obtain the isomorphism between the tangent manifolds
\begin{equation}\label{unitaryBundle2}
{\ug}(\Ac)/D^{\ug}(\delta)\simeq T\Fl(\delta)\simeq TN(\widehat\delta).
\end{equation}

On the other hand, from the decomposition 
$\Ac=\Ker\Phi_\delta\dot+\Ran\Phi_\delta$ discussed in 
Remark~\ref{connectionE}, one easily obtains  
$$
\ug(\Ac)
=\left(\Ker\Phi_\delta\cap\ug(\Ac)\right)
\dot+\left(\Ran\Phi_\delta\cap\ug(\Ac)\right)
=\left(\Ker\Phi_\delta\cap\ug(\Ac)\right)\dot+D^{\ug}(\delta)
$$
since the map $\Phi_\delta$ preserves the involution in $\Ac$ because 
$p_j\in\Pc_\perp(\Ac)$ for $j=1,\dots,n$. 
Hence, we also have the isomorphism of Banach spaces
\begin{equation}\label{unitaryBundle3}
{\ug}(\Ac)/D^{\ug}(\delta)\simeq\left(\Ker\Phi_\delta\cap\ug(\Ac)\right)
\end{equation}
and then, having together (\ref{unitaryBundle2}) and (\ref{unitaryBundle3}),  we can see that the hermitian character of the algebra $\Ac$ is also encoded in the real Banach isomorphism between $TN(\widehat\delta)$ and 
$\Ker\Phi_\delta\cap\ug(\Ac)$; that is, between 
$TN(\widehat\delta)$ and 
$\left(TN(\widehat\delta)\dot+TN(\delta)\right)\cap\ug(\Ac)$.

\begin{remark}\label{pre-reduction}
\normalfont
Let $D(\delta)_+^\times$ denote the subset of positive invertible elements of $D(\delta)$.
The unique decomposition 
$$
\Ac^\times=U(\Ac)D(\delta)_+^\times N(\delta)
$$
obtained in \cite[Cor. 3.7]{BN10} for {\it hermitian} Banach algebras 
 is the key property underlying the observation prior to this remark. 
In effect, the above factorization, besides giving us the diffeomorphism \eqref{unitaryBundle}, also implies 
the direct sum of tangent spaces  
$$
\Ac=\ug(\Ac)\dot+D_{\rm sym}(\delta)\dot+TN(\delta),
$$ 
where with $D_{\rm sym}(\delta)$ we denote the self-adjoint elements of $D(\delta)$. Then, up to isomorphism,
$$
\Ac=TN(\widehat\delta)\dot+\ug_\delta(\Ac)\dot+D_{\rm sym}(\delta)
\dot+TN(\delta)=TN(\widehat\delta)\dot+TN(\delta)\dot+D(\delta)
$$
as shown in Remark \ref{connectionE}.
\end{remark}

It follows from the discussion preceding the above remark that the map 
$\Phi_\delta\mid_{\ug(\Ac)}$ is a principal connection for the unitary reduced bundle 
$U(\Ac)\to U(\Ac)/D^U(\delta)=\Fl(\delta)$. For, given $g$ in the structure group $D^U(\delta)$ of the bundle and $a\in\ug(\Ac)=TU(\Ac)$,
$$ 
\begin{aligned}
(\Phi_\delta\circ T\mu_g)(a)
&=\sum_{j=1}^{n+1}\widehat p_{j-1}p_j(a g)\widehat p_{j-1}p_j\\&
=\sum_{j=1}^{n+1}\widehat p_{j-1}p_j(a)\widehat p_{j-1}p_j g
=(T\mu_g\circ\Phi_\delta)(a).
\end{aligned}
$$
Analogous properties hold in the case of Stiefel bundles.

For every $j\in\{1,\dots,n\}$, define 
$\Vc^U(p_j,\Ac):=\{up_j\mid u\in U(\Ac)\}\subseteq \Vc(p_j,\Ac)$. Then 
one has the Stiefel bundle 
$\sigma_j\colon\Vc(p_j,\Ac)\to\Gr(p_j,\Ac)$ 
with its unitary reduction $\sigma_j^U\colon\Vc^U(p_j,\Ac)\to\Gr(p_j,\Ac)$ defined by 
$$
\Vc^U(p_j,\Ac):=\{up_j\mid u\in U(\Ac)\}\subseteq \Vc(p_j,\Ac)
$$ 
and $\sigma_j^U:=\sigma_j\vert_{\Vc^U(p_j,\Ac)}$.  
(See for instance \cite[Ch. II]{KN63} for reduction theory of finite-dimensional principal bundles.) 

Let now $\delta=(p_1,\dots,p_n)\in\Pc_n(\Ac)$ with $p_j\in\Pc_{\perp}(\Ac)$ for $j=1,\dots,n$. We define 
$\Vc^U(\delta,\Ac):=\{(up_1,\dots, up_n)\mid u\in U(\Ac)\}\simeq\Vc^U(p_n,\Ac)$, so that there is the diffeomorphism 
$\Vc^U(\delta,\Ac)\simeq U(\Ac)/(\Ac^\times_\delta\cap U(\Ac))
=U(\Ac)/(\{u\in U(\Ac)\mid up_n=p_n\})$. 
On the level of tangent spaces,
$$
T\Vc^U(\delta,\Ac)=\ug(\Ac)/\{a\in\ug(\Ac)\mid ap_n=0\}\simeq\ug(\Ac) p_n.
$$
Set $\Gc^U(\delta):=D^U(\delta)/\{up_n=p_n\}\simeq D^U(\delta)p_n$, which is a subgroup of $\Gc(\delta)$. 
As in Lemma \ref{normalG}, the group 
$\Gc^U(\delta)$ acts freely, on the right, on the manifold $\Vc^U(\delta,\Ac)$ and that action is smooth. 
Moreover, and similarly to Proposition \ref{commonFl}, 
the manifolds $\Fl(\delta)$ and $\Vc^U(\delta,\Ac)/\Gc^U(\delta)$ are diffeomorphic. 
We call {\it unitary Stiefel bundle} the projection map
$$
\sigma^U\colon\Vc^U(\delta,\Ac)
\to\Vc^U(\delta,\Ac)/ \Gc^U(\delta)\simeq\Fl(\delta).
$$

\begin{proposition}\label{principalUniStiefel}
The unitary Stiefel bundle is a $\Gc^U(\delta)$-principal bundle under the right-action 
$$
\Vc^U(\delta,\Ac)\times \Gc^U(\delta)\to\Vc^U(\delta,\Ac), 
\quad (up_n, ap_n)\mapsto uap_n,
$$
admitting a principal connection $\Phi_\delta^U$ given by 
$$
\Phi_\delta^U\colon\ug(\Ac)p_n\to D^{\ug}(\delta)p_n, 
\quad ap_n\mapsto \Phi_\delta(a)p_n.
$$ 
\end{proposition}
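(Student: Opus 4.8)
The plan is to establish the two claims in turn: that $\sigma^U$ is a $\Gc^U(\delta)$-principal bundle, and that $\Phi_\delta^U$ is a principal connection on it. The bundle structure I would prove exactly as Proposition~\ref{Stiefel}, transported into the unitary group. The freeness and smoothness of the right $\Gc^U(\delta)$-action and the identification $\Vc^U(\delta,\Ac)/\Gc^U(\delta)\simeq\Fl(\delta)$ are already recorded in the discussion preceding the statement, so only local triviality remains. I would first note that $K_0:=\{u\in U(\Ac)\mid up_n=p_n\}$ is normal in $D^U(\delta)$: for $g\in D^U(\delta)$ and $w\in K_0$ one has $gwg^{-1}p_n=gwp_ng^{-1}=gp_ng^{-1}=p_n$, using that $g\in D(\delta)$ commutes with $p_n$. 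Since $U(\Ac)\to\Fl(\delta)\simeq U(\Ac)/D^U(\delta)$ is a principal $D^U(\delta)$-bundle by~\eqref{unitaryBundle}, a local section $s\colon V\to U(\Ac)$ over $V\subseteq\Fl(\delta)$ yields the $\Gc^U(\delta)$-equivariant trivialization $(z,gp_n)\mapsto s(z)gp_n$ of $\sigma^U$ over $V$, which descends cleanly because $\Gc^U(\delta)=D^U(\delta)/K_0$ with $K_0\lhd D^U(\delta)$; alternatively one exhibits it directly from the unitary factorization $\Ac^\times=U(\Ac)D(\delta)_+^\times N(\delta)$ of Remark~\ref{pre-reduction}, in analogy with Proposition~\ref{factor}.

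For the connection I would read the displayed formula as the value of $\Phi_\delta^U$ on the model tangent space at the reference point $p_n$, where $T\Vc^U(\delta,\Ac)\simeq\ug(\Ac)p_n$ and the vertical subspace is $D^{\ug}(\delta)p_n$, and then verify the hypotheses of Definition~\ref{def11} together with the equivariance of Theorem~\ref{principalCARACT}(4). The first point is that $ap_n\mapsto\Phi_\delta(a)p_n$ is well defined on $\ug(\Ac)p_n$, i.e. that $ap_n=0$ forces $\Phi_\delta(a)p_n=0$. This holds because $p_j=p_np_j$ gives $ap_j=(ap_n)p_j=0$ for all $j\le n$, so in $\Phi_\delta(a)p_n=\sum_{j=1}^{n+1}(p_j-p_{j-1})a(p_j-p_{j-1})p_n$ every summand with $j\le n$ vanishes (as $a(p_j-p_{j-1})=0$) while the $j=n+1$ summand vanishes through the trailing factor $(p_{n+1}-p_n)p_n=0$. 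Since $p_j\in\Pc_\perp(\Ac)$ the map $\Phi_\delta$ preserves the involution, so $\Phi_\delta(a)\in\ug(\Ac)$ and hence $\Phi_\delta(a)p_n\in D^{\ug}(\delta)p_n$; moreover $\Phi_\delta$ restricts to the identity on $D(\delta)=\Ran\Phi_\delta$, so $\Phi_\delta^U(dp_n)=dp_n$ for $d\in D^{\ug}(\delta)$. Together these show that $\Phi_\delta^U$ is a fiberwise linear idempotent whose range is exactly the vertical subspace $D^{\ug}(\delta)p_n$, consistent with the splitting $\ug(\Ac)=(\Ker\Phi_\delta\cap\ug(\Ac))\dot+D^{\ug}(\delta)$ obtained just before~\eqref{unitaryBundle3} and the relations $TN(\widehat\delta)=TN(\widehat\delta)p_n$ and $\Ac p_n=TN(\widehat\delta)\dot+\Delta(\delta)p_n$ established above.

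The central point is $\Gc^U(\delta)$-equivariance, which is the exact analogue of the computation displayed just before the statement. For $g\in D^U(\delta)$ the tangent right translation by $gp_n$ sends $ap_n$ to $agp_n$, and since $g$ commutes with each $p_j$, hence with $p_j-p_{j-1}=\widehat p_{j-1}p_j$, one gets $\Phi_\delta(ag)=\Phi_\delta(a)g$, so that
\[
(\Phi_\delta^U\circ T\mu_{gp_n})(ap_n)=\Phi_\delta(ag)p_n=\Phi_\delta(a)gp_n=\Phi_\delta(a)p_ng=(T\mu_{gp_n}\circ\Phi_\delta^U)(ap_n).
\]
This is the required identity $\Phi_\delta^U\circ T\mu_{gp_n}=T\mu_{gp_n}\circ\Phi_\delta^U$, so by Theorem~\ref{principalCARACT}(4) the equivariant extension of the reference-point operator over $\Vc^U(\delta,\Ac)$ is a genuine principal connection. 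Equivalently, and perhaps more conceptually, $\sigma^U$ is obtained from the principal $D^U(\delta)$-bundle $U(\Ac)\to\Fl(\delta)$ by passing to the quotient structure group $\Gc^U(\delta)=D^U(\delta)/K_0$, and the principal connection $\Phi_\delta\vert_{\ug(\Ac)}$ --- already verified above for that bundle --- descends through the normal subgroup $K_0$ to a principal connection on $\sigma^U$ whose model-space operator is precisely $\Phi_\delta^U$.

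I expect the main obstacle to be bookkeeping rather than a genuine difficulty: making the identifications $T\Vc^U(\delta,\Ac)\simeq\ug(\Ac)p_n$ and the vertical subbundle precise enough that the reference-point operator and the equivariance identity assemble into a globally defined smooth $\Gc^U(\delta)$-equivariant map $TP\to\Vc P$ in the sense of Theorem~\ref{principalCARACT}(4) --- here one must check consistency of the equivariant extension, which follows from the $D^U(\delta)$-equivariance of $\Phi_\delta\vert_{\ug(\Ac)}$ --- and verifying smoothness of the descent in the local-triviality step. All the algebra is a direct transcription of the relations already used for $U(\Ac)\to\Fl(\delta)$ and in Proposition~\ref{Stiefel}, so no new phenomenon should arise.
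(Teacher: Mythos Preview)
Your proposal is correct and follows essentially the same approach as the paper. The paper's proof is a two-sentence sketch: it refers back to the equivariance computation already displayed for $\Phi_\delta\vert_{\ug(\Ac)}$ (which you reproduce explicitly), and it obtains local trivializations from the properties of $\Phi_\delta$; your version supplies the details the paper omits --- well-definedness of $\Phi_\delta^U$ on $\ug(\Ac)p_n$, the normality of $K_0$ in $D^U(\delta)$, and the descent of sections from the principal $D^U(\delta)$-bundle $U(\Ac)\to\Fl(\delta)$ --- but the strategy is the same.
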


\begin{proof}
Like formerly for $\Phi_{\delta}\vert_{\ug(\Ac)}$, we have that $\Phi_\delta^U$ is 
$\Gc^U(\delta)$-equivariant from $\ug(\Ac)p_n$ to $D^{\ug}(\delta)p_n$. Also, the fact that $\Phi_\delta^U$ is the restriction of the map $\Phi_\delta$, which indeed is defined on all of $\Ac$, and the properties of $\Phi_\delta$ allow us to get suitable sections showing that the projection $\sigma^U$ has local trivializations compatible with the action of $\Gc^U(\delta)$.
\end{proof}

Naturally, the connection $\Phi_\delta^U$ given in Proposition 
\ref{principalUniStiefel} can be translated in terms of its equivalent notions 
as done in Theorem \ref{principalCARACT}, as a connection form in particular. On the other hand, the connection 
$\Phi_\delta^U$ induces linear connections on the vector bundles associated with $\sigma^U$ via $*$-representations of 
$D^U(\delta)p_n$. Next, we consider the case of the tautological vector bundles.

\medskip
Let $\delta\in\Pc_n(\Ac)$ with $p_j\in\Pc_{\perp}(\Ac)$ for $j=1,\dots,n$. Put 
$$
\Tc_\delta(A)
:=\{\left((up_1\Ac,\dots,up_n\Ac), (up_1x_1,\dots,up_nx_n)\right)\mid u\in U(\Ac), x\in\Ac\}
$$
and define the tautological bundle 
$\Pi_\delta\colon \Tc_\delta(A)\to\Fl(\delta)$ by
$$
\Pi_\delta\colon\left((up_j\Ac)_{j=1}^n, (up_jx_j)_{j=1}^n\right)
\mapsto (up_j\Ac)_{j=1}^n\equiv (up_ju^{-1})_{j=1}^n.
$$

On the other hand, under the representation defined by the natural juxtaposition action $D^U(\delta)p_n\times (p_1\Ac\times\dots p_n\Ac)$ one obtains the vector bundle 
$$
\Pi_{\sigma(\delta)}^U\colon\Vc^U(\delta,\Ac)\times
_{D^U(\delta)p_n} 
(p_1\Ac\times\dots p_n\Ac)\to\Fl(\delta)
$$
given by the asignment $(up_n,(p_1x_1,\dots,p_nx_n))\mapsto (up_1\Ac,\dots,up_n\Ac)$. 

Let $\Theta_\Vc\colon\Vc^U(\delta,\Ac)\times
_{D^U(\delta)p_n} 
(p_1\Ac\times\dots p_n\Ac)\to\Tc_\delta(A)$ denote the map
$$
(up_n,(p_1x_1,\dots,p_nx_n))\mapsto
((up_1\Ac,\dots,up_n\Ac),(up_1x_1,\dots,up_nx_n)).
$$
It is readily seen that the pair $(\Theta_\Vc,\id_{\Fl(\delta)})$ is a vector bundle diffeomorphism from $\Pi_{\sigma(\delta)}^U$ onto $\Pi_\delta$. 

Then, in accordance with what was recalled in the subsection~\ref{subs2.4}, the connection induced by $\Phi_\delta^U$ on 
$\Pi_{\sigma(\delta)}^U=\Pi_\delta$ is the map
$$
\widetilde\Phi_\delta\colon 
\ug(\Ac)p_n\times_{D^U(\delta)p_n}T\Pi_{j=1}^np_j\Ac\to
\ug(\Ac)p_n\times_{D^U(\delta)p_n}T\Pi_{j=1}^np_j\Ac
$$
given by 
$$
\widetilde\Phi_\delta([(ap_n,(p_jx_j)_{j=1}^n,(p_jy_j)_{j=1}^n)])
=[(\Phi_\delta(a)p_n, (p_jx_j)_{j=1}^n,(p_jy_j)_{j=1}^n)] 
$$
for every $a\in\ug(\Ac)$ and $x_j,y_j\in \Ac$.

From the above, one can obtain the other notions equivalent to the connection $\widetilde\Phi_\delta$ on the tautological bundle on $\Fl(\delta)$, such as the connector map, and also the covariant derivative associated to it.

\begin{remark}\label{CPRcon}
\normalfont
Differential properties of sets of idempotents have been studied in several papers,  
for instance in the classical reference  
\cite{CPR90}, where in particular a principal connection is constructed on a certain principal bundle. In this remark we point out the relationship between such a connection and the principal connections discussed earlier in the present section.

For any integer $n\ge1$ let us define 
$$
Q_n:=\{(q_1,\dots,q_n)\in\Ac^n\mid q_jq_k=\delta_{jk}q_j\text{ for }1\le j,k\le n\text{ and }q_1+\cdots+q_n=\1\}
$$
where $\delta_{jk}=1$ if $j=k$ and $\delta_{jk}=0$ otherwise. 
The set $Q_n$ is a closed submanifold of $\Ac^n$ by \cite[Th. 1.7]{CPR90}. 
The Banach-Lie group $\Ac^\times$ acts on that manifold by 
$$
\Ac^\times\times Q_n\to Q_n,\quad (g,(q_1,\dots,q_n))\mapsto (gq_1g^{-1},\dots,gq_ng^{-1})
$$
and for every $q=(q_1,\dots,q_n)\in Q_n$ we denote its orbit by~$V_q$. 
It follows by \cite[Th. 2.1]{CPR90} that every orbit $V_q$ is a closed and open subset of~$Q_n$, 
and the orbit map of the above action, $\pi_q\colon\Ac^\times\to V_q$, $g\mapsto (gq_1g^{-1},\dots,gq_ng^{-1})$, 
is a principal bundle with its structural group $(\Ac^\times)_q:=\{g\in\Ac^\times\mid gq_j=q_jg\text{ for }j=1,\dots,n\}$. 
A natural principal connection on that principal bundle was constructed in \cite[Sect. 4]{CPR90}, 
its corresponding connection 1-form at $\1\in\Ac^\times$ being given by the idempotent linear map
$$
\theta\colon\Ac\to\Ac,\quad \theta(a)=\sum_{j=1}^nq_jaq_j
$$
(see particularly \cite[Rem. 4.11]{CPR90}).

There exists for every $n\ge0$ a bijection $\alpha$ between $Q_{n+1}$ and $\Pc_n(\Ac)$ given by 
$\alpha(q)_k:=\sum_{j=1}^k q_j\in\Pc_n(\Ac)$, $k=1,\dots, n$, for every $q=(q_1,\dots,q_{n+1})\in Q_{n+1}$ 
with inverse $\alpha^{-1}(\delta)_j:=p_j-p_{j-1}\in Q_{n+1}$, $j=1,\dots,n+1$ (with $p_{n+1}=\1$), for every 
$\delta=(p_1,\dots,p_n)$. Since the mappings $\alpha$ and its inverse $\alpha^{-1}$ are smooth (holomorphic, indeed) 
on $\Ac^{n+1}$ and $\Ac^n$ respectively, one has that $\Pc_n(\Ac)$ is also a closed submanifold of $\Ac^n$, diffeomorphic to $Q_{n+1}$. The diffeomorphism $\alpha$ induces by restriction the corresponding diffeomorphism between $V_q$ and $O(\delta)$ where 
$O(\delta):=\{(gp_1g^{-1},\dots,gp_ng^{-1})\in\Ac^n\mid g\in\Ac^{-1}\}$, $q=(q_1,\dots,q_{n+1})\in Q_{n+1}$ and 
$\delta=(p_1,\dots,p_n):=\alpha(q)$.
In this way, the mapping $\pi_\delta\colon\Ac^\times\to O(\delta)$, $g\mapsto (gp_1g^{-1},\dots,gp_ng^{-1})$ is a principal bundle with structure group 
$D(\delta)=(\Ac^\times)_q$ since for any $g\in\Ac^\times$ one has $gq_j=q_jg$ ($j=1,\dots,n+1$) if and only if $gp_k=p_kg$ 
($k=1,\dots,n$). Obviously, the bundles $\pi_\delta$ and $\pi_q$ are isomorphic, and we have that  the diagonal truncation $\Phi_\delta\colon\Ac\to D(\delta)$ defined in subsection 3.1 is the connection 1-form at $\1\in\Ac^\times$, for the principal bundle $\Ac^\times\to\Pc_n(\Ac)$, which corresponds to $\theta$ under the above isomorphism.

A significant difference between the paper \cite{CPR90} and the present paper (apart from the introduction here of the Stiefel bundle on flags) is that we deal with generalized Grassmannians on $\Ac^n$, that is, with equivalence {\it classes} of $n$-uples of idempotents rather than $n$-uples of idempotents. 
Thus for the principal bundle $A^\times\to\Fl([\delta])$ 
the structure group is $\Delta(\delta)^\times$ instead $D(\delta)^\times$ and $\Phi_\delta$ is not a connection in this case. 
However, in the unitary case $U(\Ac)\cap\Delta(\delta)^\times=U(\Ac)\cap D(\delta)^\times$, as we have seen before, 
and then the map $\Phi_\delta\vert_{\ug(\Ac)}$ discussed in the paragraph after Remark~\ref{pre-reduction} defines a principal connection on the principal bundle $U(\Ac)\to\Fl(\delta)$, when $\delta=(p_1,\dots,p_n)$ such that 
$p_j=p_j^*$ ($j=1,\dots,n$).

More precisely,
fix $q^0=(q^0_1,\dots,q^0_{n+1})\in Q_n$ with $q^0_j=(q^0_j)^*$ for $j=1,\dots,n+1$, 
and take  
$\delta^0:=\alpha(q^0)$. Put 
$V_{q^0,\perp}
:=V_{q^0}\cap\{(q_1,\dots,q_{n+1})\in Q_{n+1}\mid q_j=q_j^*\ \rm{ for }\ j=1,\dots,n+1\}$. 
One has the following commutative diagram 
\begin{equation*}
\begin{CD}
U(\Ac) @>{\1_{U(\Ac)}}>> U(\Ac)@>>> \Ac^\times=\Ac^\times \\
@VV{}V @VV{}V @VV{\pi_{q^0}\simeq\pi_{\delta^0}}V \\
\Fl(\delta^0)  @>{\alpha^{-1}}>> V_{q^0,\perp} @>{\iota}>> V_{q^0}\simeq O(\delta^0) 
\end{CD}
\end{equation*} 
where 
$\iota$ is the inclusion map, while the upper horizontal arrow from the above diagram is 
the inclusion map $U(\Ac) \hookrightarrow \Ac^\times$, 
and the left vertical arrow in that diagram is the principal bundle 
$u\mapsto (up_1u^{-1},\dots,up_{n-1}u^{-1})$ with structural group 
$\Delta(\delta)\cap U(\Ac)$ 
as in equation~\eqref{unitaryBundle}. 
It is easily seen that the map $\iota\circ\alpha^{-1}$ is an embedding of $\Fl(\delta^0)$ as a submanifold of $V_{q^0}$, 
and via the identification $\Fl(\delta^0)\simeq(\iota\circ\alpha^{-1})(\Fl(\delta^0))$.

The above commutative diagram gives a morphism of principal bundles as in the paragraph prior to \cite[Ch. I, Prop. 5.3]{KN63}, 
and the principal bundle $U(\Ac)\to \Fl(\delta^0)$ is a reduced bundle of the 
restriction of the principal bundle $\pi_{q^0}$ to the submanifold 
$\Fl(\delta^0)\simeq(\iota\circ\alpha^{-1})(\Fl(\delta^0))\hookrightarrow V_{q^0}$. 
Equivalently, $U(\Ac)\to \Fl(\delta^0)$ is a reduced bundle of the 
pullback principal bundle $(\iota\circ\alpha^{-1})^*(\pi_{q^0})$. 
In this context, the reduction of the principal connection on $\iota^*(\pi_{q^0})$ defined by the above idempotent map~$\theta$ 
is the connection defined by the map $\Phi_\delta\vert_{\ug(\Ac)}$ discussed in the paragraph after Remark~\ref{pre-reduction}. 
\end{remark}

\begin{remark}
\normalfont
The structure on the principal bundle 
$\Vc(\delta,\Ac)\to\Fl_\Ac([\delta])$ associated to the projection map $E_\delta\colon \Ac\to\Delta(\delta)$ and its restriction
$\widetilde E_\delta\colon\ug(\Ac)p_n\to\Delta(\delta)p_n$ (see \eqref{connectionE} and above subsection~\ref{subsect3.3}),  
show up simultaneously with the reductive structure on the unitary principal bundle $\Vc^U(\delta,\Ac)\to\Fl(\delta)$ associated with the principal connection 
$\Phi_\delta^U\colon\ug(\Ac)p_n\to D^{\ug}(\delta)p_n$ (see Proposition 
\ref{principalUniStiefel}). 
The simultaneous occurrence of the aforementioned structures suggests to investigate the relationship between almost holomorphic structures on the Stiefel bundle and horizontal distributions (connections) on the unitary Stiefel bundle. This topic will be considered in a forthcoming paper. 
\end{remark}

\begin{remark}
\normalfont
To complete the present discussion, let us point out some facts about frame bundles in the present setting.

For $\delta=(p_1,\dots,p_n)$ as before, and $k=1,\dots,n$ one has the $\Ac^\times$-equivariant submersion 
$$
\pr_k\colon\Fl(\delta)\to\Gr(p_k,\Ac), 
\quad [(gp_1g^{-1},\dots,gp_ng^{-1})]\mapsto [gp_kg^{-1}],
$$
which is just the $k$-th component of the natural $\Ac^\times$-equivariant inclusion 
$$
\Fl(\delta)\hookrightarrow \Gr(p_1,\Ac)\times\cdots\times\Gr(p_n,\Ac).
$$

Let $\sigma_k\colon\Vc(p_k,\Ac)\to\Gr(p_k,\Ac)$ denote the Stiefel bundle
with its unitary reduction $\sigma_k^U\colon\Vc^U(p_k,\Ac)\to\Gr(p_k,\Ac)$.
The pullback bundle 
$$
\pr_k^*(\sigma_k^U)\colon \pr_k^*(\Vc^U(p_k,\Ac))\to\Fl(\delta),
$$
is a principal bundle whose structure group is $U(\widehat{p_k}\Ac\widehat{p_k})$ just like the structure group of $\sigma_k^U$. 
The principal bundle can be regarded as the \emph{$k$-th partial frame bundle on the flag manifold $\Fl(\delta)$}, 
because it consists of frames that take into account the mutual positions only of the terms $p_1,\dots,p_k$ of the flag $\delta$. 

It is easily seen that the bundle $\sigma_k^U$ is $U(\Ac)$-equivariantly isomorphic to the natural principal bundle 
$$U(\Ac)/U(\widehat{p_k}\Ac\widehat{p_k})\to U(\Ac)/(U(p_k\Ac p_k)\times U(\widehat{p_k}\Ac\widehat{p_k}))$$
defined by the canonical embedding $U(\widehat{p_k}\Ac\widehat{p_k})\hookrightarrow U(p_k\Ac p_k)\times U(\widehat{p_k}\Ac\widehat{p_k})$. 
Hence $\sigma_k^U$ has a canonical principal connection $\Phi_k$ 
that is both left $U(\Ac)$-invariant and right $U(\widehat{p_k}\Ac\widehat{p_k})$-invariant, 
just as in the finite-dimensional situation studied in \cite[Prop. 1]{NR61} and \cite{NR63}.  
Furthermore, this principal connection induces linear connections and their corresponding covariant derivatives on all vector bundles associated with $\sigma_k^U$ by the construction of subsection~\ref{subs2.4} (see also \cite[Th. 2.2]{BG14}). 
These associated vector bundles can be constructed from $*$-representations of $\Ac$. 
And finally, the vector bundles associated with $\sigma_k^U$ can be pulled back to $\Fl(\delta)$ via $\pr_k$, 
just as in the above construction of partial frame bundles, and the linear connections are transferred by that pullback operation 
using Proposition~\ref{C9}. 

In the special case when $\Ac$ is a $W^*$-algebra, Hermitian vector bundles on flag manifolds 
constructed in the above way occur in certain problems of noncommutative spectral theory, 
as shown in \cite[Sect. 3]{Be02}. 
Therefore we expect that the differential geometric methods developed in the present paper 
and in particular the infinitesimal symmetries of Grassmann and flag manifolds associated to Banach algebras 
might have applications in the spectral theory of operator Lie algebras, 
besides the applications to reproducing kernels on vector bundles that are suggested by \cite{BG14}, \cite{BG15a}, and \cite{BG15b}. 
\end{remark}

\subsection*{Acknowledgment} 
We wish to thank the Referee for carefully reading our manuscript and for 
the suggestion that we should clarify the relation between 
the principal connection from Proposition~\ref{principalUniStiefel} and the principal connection 
constructed in the classical reference \cite[Section 4]{CPR90}. See Remark \ref{CPRcon}.

\end{document}